\documentclass[reqno, oneside, 12pt]{amsart}

\usepackage[letterpaper]{geometry}
\geometry{tmargin=1in,bmargin=1in,lmargin=1in,rmargin=1in}

\usepackage{amsmath}
\usepackage{amssymb}
\usepackage{enumerate}
\usepackage{enumitem}
\usepackage{verbatim}
\usepackage{mathrsfs}
\usepackage{mathtools}
\usepackage{stmaryrd}

\usepackage{graphicx,amssymb, amsmath, amsthm, cite, xcolor}
\usepackage{fullpage, comment,mathrsfs}

\usepackage{tikz}
\usetikzlibrary{patterns}
\newcommand{\scale}{1.4}


\mathtoolsset{centercolon}

\usepackage{setspace}

\numberwithin{equation}{section}

\newtheorem{theorem}{Theorem}
\newtheorem{corollary}[theorem]{Corollary}

\newtheorem*{lemma*}{Lemma}
\newtheorem{proposition}[theorem]{Proposition}

\theoremstyle{remark}
\newtheorem*{remark*}{Remark}

\renewcommand{\tilde}{\widetilde}

\newcommand{\R}{\mathbb{R}}

\newcommand{\N}{\mathbb{N}}
\newcommand{\C}{\mathbb{C}}

\newcommand{\ds}{\displaystyle}
\newcommand{\mb}[1]{\mathbf{#1}}
\newcommand{\field}[1]{\mathbb{#1}}

\date{}
\author{Thomas E Carty}
\address{Department of Mathematics\\
Bradley University\\
Peoria, IL 61625}
\email{tcarty@bradley.edu}

\title{Elementary Solutions for a model Boltzmann Equation in one-dimension and the connection to Grossly Determined Solutions}
\subjclass[2010]{35Q35, 76P99, 46F12}

\begin{document}
\begin{abstract}
The Fourier-transformed version of the BGK model in one-dimension is solved in order to determine the general solution's asymptotics.  The ultimate goal of this paper is to demonstrate that the solution to the model Boltzmann possesses a special property that was conjectured by Truesdell and Muncaster: that solutions decay to a subclass of the solution set uniquely determined by the initial first moment of the gas. First we determine the spectrum and eigendistributions of the associated homogeneous equation.  Then, using Case's method of elementary solutions, we find analytic time-dependent solutions to the original problem.  In doing so, we show that the spectrum separates the solutions into two distinct parts; one that behaves as a set of transient solutions and the other limiting to a stable subclass of solutions.  This demonstrates that in time all gas flows for the one-dimensional BGK model Boltzmann act as grossly determined solutions.
 \end{abstract}
 \thanks{The author acknowledges support
from National Science Foundation grant DMS 08-38434 ``EMSW21-MCTP: Research
Experience for Graduate Students" and from the Caterpillar Fellowship Grant at Bradley University.}
 \keywords{Boltzmann equation, BGK equation, generalized eigenfunctions, continuous spectrum, grossly determined solutions}
\maketitle

\section{Introduction}

The partial integro-differential equation (PIDE) \eqref{thepide} is a simplification of the linearized Boltzmann equation in one-dimension.  Let $x\in\field{R}$ represent the position of a molecule and let $v\in \field{R}$ be the velocity of that molecule.  We consider the model of fluid motion dictated by
\begin{eqnarray}
\label{thepide}
\frac{\partial f}{\partial t}(t,x,v)+v\frac{\partial f}{\partial x}(t,x,v)=-f(t,x,v)+\int_{-\infty}^{\infty} \phi(w)f(t,x,w)dw
\end{eqnarray}
where $\phi(w)$ is the probability density function $ \phi(v)=e^{-v^2}/\sqrt{\pi}$ and the unknown function $f(t,x,v)$ represents the molecular density function of a monotomic gas.  In the classical Boltzmann theory, the right hand side of \eqref{thepide} is interpreted as the collision operator -- dictating the behavior of the gas under inter-molecular collisions.  A purely one-dimensional derivation of \eqref{thepide} can be found in \cite{carty:2016}.  The equation can also be interpreted as the BGK model of the Boltzmann equation (see \cite{BGK(1954)} and Chapter IV in \cite{Cerci(1969)MMKT_text}) under the simplification that velocity is no longer allowed to wander 3-dimensionally.

In \cite[Ch. XXIII]{Tru_n_Mun}, C. Truesdell and R. G. Muncaster remark that -- no matter which model of gas flow you begin with -- the ultimate goal is the same: determine the density, velocity and temperature fields of the gas.  They then note that many of the known exact solutions of Boltzmann's equation -- such as those solutions derived from Hilbert's iteration (see \cite[pg. 316]{Cerci_Dilute_Text} or \cite[Ch. XXII]{Tru_n_Mun}), or the Chapman and Enskog procedure (see \cite[pg. 86]{harris2012introduction}) -- shared the property that the solution class could be represented as being dependent on one (or more) of the gas's physical properties (moments).  This led them to define the concept of a \textbf{\emph{grossly determined solution}}: a solution which is determined at any given instant by the gross conditions (mass density, velocity, temperature) of the gas at that time.  In their epilogue, the authors suggest that these concepts may lead to a new way forward for finding unifying solutions to the Boltzmann equation:
\begin{enumerate}
	\item In general, can we determine a set of conservation laws that define the gross field properties?
	\item Can we use these conservation laws to determine the class of grossly determined solutions to the problem?
	\item If one could find the class of general solutions, can we show that the general solutions evolve asymptotically in time to the class of grossly determined solutions?
\end{enumerate}
In addition to finding a new, richer class of solutions to the Boltzmann equation (a microscopic/atomic level model of gas flow), the class of grossly determined solutions would now be in terms akin to the solutions of the Navier-Stokes equations (a macroscopic/gross fields model of gas dynamics).

The main result in \cite{carty:2016} was to demonstrate that grossly determined solutions exist to \eqref{thepide} that depend solely on the density field of the gas. In doing so, the first two conjectures were shown true for the model Boltzmann \eqref{thepide}.  Specifically, it was shown that given the density of the gas at any moment of time, one can determine the density function $\rho(t,x)$ of the gas for all time and subsequently a solution $f(t,x,v)$ to \eqref{thepide}, which itself is a function $\rho(t,x)$.  It should be noted that a grossly determined solution based upon the first moment alone is the best that can be expected the model equation \eqref{thepide}.  As opposed to the full Boltzmann equation, the collision operator $C(f):=-f(t,x,v)+\int_{\R} \phi(w)f(t,x,w)dw$ only possesses a conservation of mass condition.  In the reduction of the model, conservation of momentum and energy have been lost in the collision operator.  Thus, we don't expect to find grossly determined solutions based upon velocity and energy.  The goal of this paper is to demonstrate the third item, that general solutions evolve asymptotically in time to the class of grossly determined solutions found in \cite{carty:2016}.  To this end, there are three main components of this paper: construction of the solution candidate via spectral methods, proving that the candidate solution is complete and unique, and demonstrating the desired decay condition.

Solutions to \eqref{thepide} under the Laplace transform were first indicated by Cercignani \cite{Cerci(1962)} (additionally see Chapter VII of \cite{Cerci(1969)MMKT_text} or \cite{Gosse(2012)} (pg. 289-292)) using Case's method of elementary solutions \cite{case1960elementary}.  While in many ways emulating the same technique as Cercignani, there are some distinct differences in this current work.  For one, we will be constructing solutions under the Fourier transform.  This is due to the fact that the construction of the grossly determined solutions obtained in \cite{carty:2016} was done under the Fourier transform. As a result, the spectrum and eigensolutions are of a different form than in Cericignani's work.  The spectral representation of the solution (in both Cercignani's work and this work) results in an integral equation of the third kind where the integral is over $\R$.  However, in this work we will show there is an additional portion of the continuous spectrum for the operator.  In fact, it is exactly this new real-valued line segment in the spectrum that corresponds to the subclass of grossly determined solutions and guarantees the asymptotic behavior in Truesdell and Muncaster's third conjecture.  We will also show that the familiar portion of the spectrum, corresponding to the infinite line resulting in the integral equation of the third kind, behaves as a set of transient solutions.  In the end, we show that in time all gas flows for the model Boltzmann \eqref{thepide} act as grossly determined solutions.

The majority of the applications of Case's method are to linear transport equations and result in integral equations over the domain $[-1,1]$ where special care needs to be taken at the end points.  (See \cite{klinc1975eigenfunctions, Hangelbroek, Spec_Mthds_Linear_Trans}.)  For application of elementary solutions to model Boltzmann equations, the boundary conditions are often either overlooked (see \cite{Cerci(1962),cercignani1971methods}) or addressed by a change of variables transformation so the integral is over $[-1,1]$ (see \cite{degroot_dalitz1997exact, dalitz1997half}) and solved using more modern integral equation methods (see \cite{Estrada}).  We desire a clean connection between grossly determined solutions and the general solution class.  To this end, we appeal to a result of Gakhov \cite{Gakhov3rd} that allows us to solve the associated Riemann-Hilbert problem from the natural integral equation that arises from integrating over the infinite spectrum.  In the end, we will have an exact solution to the model Boltzmann \eqref{thepide} that is dependent upon initial data.

\section{Associated Spectral Problem}

We begin by taking the Fourier transform of equation \eqref{thepide} in the spatial variable.  This yields the transformed PIDE
\begin{equation*}
\frac{\partial \hat{f}}{\partial t}(t,\xi,v)=-v i\xi\hat{f}(t,\xi,v)-\hat{f}(t,\xi,v)+\int_{\R} \phi(w)\hat{f}(t,\xi,w)dw
\end{equation*}
and we write this as
\[\frac{\partial \hat{f}}{\partial t}(t,\xi,v)=L(\hat{f})(t,\xi,v)\]
where the operator $L$ is defined to be \begin{eqnarray}
\label{L}
L(g)(\xi,v) &:=& -\xi v i g(\xi,v)-g(\xi,v)+\int_{\R} \phi(w)g(\xi,w)dw.
\end{eqnarray}

We need to consider an appropriate class of functions on which $L$ will operate.  In the classical theory on the Boltzmann equation it is tradition to work with a function space defined by a weighted $L_2$ norm.  Doing so imbues the operator on the right-hand side of the Boltzmann equation (the collision operator) with the desired properties of being self-adjoint and negative definite. (See \cite{Dolera}.)  In \cite{carty:2016}, the linear operator $C(g):= -g(\xi,v)+\int_{\R}\phi(w)g(\xi,w)\,dw$ is shown to be, for each fixed $\xi$, self-adjoint and semi-negative definite on the function space $\mathscr{F}_v$ defined to be the class of functions such that
\[\|f(v)\|_{2,\phi}^2=\int_{\R}|f(v)|^2\phi(v)dv  < \infty \text{ where } \phi(v)=e^{-v^2}/\sqrt{\pi} .\]

Note that $L$ can be written in terms of the collision operator,
\[L(g) = -\xi v i g +C(g).\]
We want to guarantee that repeated applications of $L$ to a function from our chosen function space will remain in our function space.  To this end, we require that our functions are Schwartz class in both the spatial and velocity variables. Thus we define $\mathscr{F}$ such that
\[\|f(\xi,v)\|_{2,\phi}^2=\int_{\R^2}|f(\xi,v)|^2\phi(v)dv d\xi < \infty\]
and such that $\lim_{\xi,v \to \pm \infty}\xi^n v^m f(\xi,v) =0$ for all $n,m\in\N$.
By construction, the operator $L$ maps functions from $\mathscr{F}$ to $\mathscr{F}$ and repeated applications of the operator $L$ to a function $f(\xi,v) \in \mathscr{F}$ will remain in the space of functions.

\section{The Resolvent Operator}

In order to determine the potential spectrum of $L$, we begin by identifying the resolvent operator.  Upon constructing the spectral decomposition, we are likely to find that we need even more that just operating over the functions space $\mathscr{F}$.  However, at this time we can formally find $(L-\lambda)^{-1}$ and use its construction to hint at where the spectrum of $L$ must lie.

\begin{proposition}
Let $h\in \mathscr{F}$ and $\lambda \in \C$ and consider the equation $(L-\lambda)g=h$.  Then the formal inverse of $(L-\lambda)$ is defined by
\begin{equation}
(L-\lambda)^{-1}h(\xi,v):=\frac{-1}{1 + \lambda+\xi v i}\left(\ds h(\xi,v)+\frac{\ds \int_{\R}\frac{h(\xi,v)\phi(v)dv}{1 + \lambda+\xi v i}}{\ds 1-\int_{\R}\frac{\phi(v)dv}{1 + \lambda+\xi v i}}\right).  \label{resolvent_op}
\end{equation}
\end{proposition}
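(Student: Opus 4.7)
The plan is to do a direct formal computation: set $(L-\lambda)g=h$, solve algebraically for $g$, then close the equation by evaluating the resulting integral against $\phi$. Since $L$ has the structure ``multiplication plus a rank-one integral operator,'' the inversion reduces to a single scalar equation.

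First I would rewrite the equation $(L-\lambda)g=h$ explicitly as
\[
-(1+\lambda+\xi v i)\,g(\xi,v) + \int_{\R}\phi(w)\,g(\xi,w)\,dw \;=\; h(\xi,v).
\]
Observing that the integral term depends only on $\xi$, I would abbreviate $I(\xi):=\int_{\R}\phi(w)g(\xi,w)\,dw$ and solve formally for $g$:
\[
g(\xi,v)\;=\;\frac{-1}{1+\lambda+\xi v i}\bigl(h(\xi,v)-I(\xi)\bigr).
\]
To close this, I would multiply by $\phi(v)$ and integrate in $v$, producing the scalar identity
\[
I(\xi)\;=\;I(\xi)\int_{\R}\frac{\phi(v)\,dv}{1+\lambda+\xi v i}\;-\;\int_{\R}\frac{\phi(v)\,h(\xi,v)\,dv}{1+\lambda+\xi v i}.
\]
Solving for $I(\xi)$, under the hypothesis that $1-\int_{\R}\phi(v)/(1+\lambda+\xi v i)\,dv\neq 0$, and substituting back into the expression for $g$ yields exactly the formula \eqref{resolvent_op}.

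I would close with the observation that the derivation is entirely formal: it requires the denominators $1+\lambda+\xi v i$ and $1-\int_{\R}\phi(v)(1+\lambda+\xi v i)^{-1}dv$ to be non-vanishing, and this is where any genuine obstacle lies. These two conditions are not analytic computations but conceptual constraints, and they already foreshadow the spectrum: the values of $\lambda$ for which either denominator vanishes (for some $\xi\in\R$, $v\in\R$) correspond exactly to the singular set that will be identified as the spectrum in the subsequent sections. Because we only claim $(L-\lambda)^{-1}$ as a \emph{formal} inverse on $\mathscr{F}$, no mapping properties need be verified here; one merely checks that $(L-\lambda)$ applied to the right-hand side of \eqref{resolvent_op} returns $h$, which is the reverse of the derivation just sketched and can be omitted or relegated to a one-line remark.
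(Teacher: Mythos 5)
Your derivation is correct and coincides with the paper's own proof: both solve formally for $g$ in terms of the scalar quantity $\int_{\R}\phi(w)g(\xi,w)\,dw$, close the equation by multiplying by $\phi(v)$ and integrating, and substitute back under the two non-vanishing conditions you identify. No differences worth noting.
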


\begin{proof}
We begin with $(L-\lambda)g=h$ and formally solve for $g$:
\[-v i\xi g(\xi,v)-g(\xi,v)+\int_{\R} \phi(w)g(\xi,w)dw-\lambda g(\xi,v) = h(\xi,v).\]
Provided that $1+\lambda+\xi v i\neq 0$, we find that
\[g(\xi,v)=-\frac{1}{1+\lambda+\xi v i}\left(\ds h(\xi,v)-\int_{\R} \phi(w)g(\xi,w)dw\right).\]
We now seek an expression for $ \int_{R} \phi(w)g(\xi,w)dw$, independent of $g(\xi,v)$, that we can use to substitute into the last equation.  Multiplying by $\phi(v)$ and integrating with respect to $v$ yields
\[-\int_{\R}\phi(v)g(\xi,v)dv=\int_{\R} \frac{\ds \phi(v)h(\xi,v)dv}{1+\lambda+\xi v i}-\left(\int_{\R} \phi(w)g(\xi,w)dw\right)\left(\int_{\R} \frac{\phi(v)dv}{1+\lambda+\xi v i}\right).\]
Now, provided that $ 1-\int_{\R} \frac{\phi(v)dv}{1+\lambda+\xi v i}\neq 0$, we obtain the following expression:
\[\int_{\R}\phi(w)g(\xi,w)dw=-\frac{\ds \ds \int_{\R}\frac{h(\xi,v)\phi(v)dv}{1 + \lambda+\xi v i}}{\left(\ds 1-\int_{\R} \frac{\phi(v)dv}{1+\lambda+\xi v i}\right)}.\]
\end{proof}

The computation in the previous proof suggests where the spectrum may be located.  We look for spectral values to occur for $\lambda$ such that either
\[1-\int_{\R} \frac{\phi(v)dv}{1+\lambda+\xi v i}= 0 \text{ or } 1+\lambda+\xi v i = 0.\]
Upon determining where the spectrum must lie, we will return to the formal inverse of $(L-\lambda)$ and demonstrate that we have captured the resolvent set.

\emph{Remark:} In this instance, the computation of \eqref{resolvent_op} was straight-forward enough to do directly.  The same could have been accomplished via the theory of rank-one perturbations of self-adjoint operators.  From this viewpoint, the spectrum of a rank-one perturbation can be determined by the spectrum of the unperturbed operator and information about the perturbation determinant $d=1-\int_{\R} \frac{\phi(v)dv}{1+\lambda+\xi v i}$.  (See \cite{simon1995spectral} or \cite{liaw2015rank1notes}).  Here equation \eqref{resolvent_op} is closely related to the Aronszjan-Krein formula.  More specifically, Equation \eqref{resolvent_op} is exactly Equation (2.3) in the lecture notes of Liaw and Treil \cite{liaw2015rank1notes} using the operator $L$.

\section{The Spectral Decomposition}

We search for a basis (set of generalized eigenfunctions) $B(\xi,v)$ such that $L(B)=\lambda B$.  It is important to note that in this spectral problem, we interpret it as a spectral problem in $v$ dependent on a parameter $\xi$.  We are not computing the spectral problem in 2-dimensions simultaneously.  Using the definition of $L$, the eigenvalue problem is
\begin{equation}
\label{ss_eigen_equation}
-i \xi v B(\xi,v)-B(\xi,v)+\int_{\R} \phi(w)B(\xi,w)dw = \lambda B(\xi,v).
\end{equation}
We can then derive a recursive representation of the basis function
\begin{equation}
\label{recursive_B}
B(\xi,v)=\frac{\ds \int_{\R} \phi(w)B(\xi,w)dw}{1 +\lambda+\xi v i}.
\end{equation}

Define
\begin{equation}
\label{little b}
b(\xi):=\int_{\R} \phi(w) B(\xi,w) dw.
\end{equation}
Now \eqref{recursive_B} becomes \begin{equation}
\label{recursive_b}
B(\xi,v)=\frac{b(\xi)}{1 +\lambda+\xi v i}.
\end{equation}  Multiplying by $\phi(v)$ and integrating over the velocity space yields
\[
b(\xi)=b(\xi) \int_{\R}\frac{\phi(v)dv}{1+\lambda+\xi v i}.
\]  The above computation results in a constraint equation for $\lambda$ dependent on variable $\xi$.

\begin{proposition} Let $B(\xi,v)$ satisfy the spectral equation $L(B)=\lambda B$ and let \[ b(\xi)=\int_{\R} \phi(w) B(\xi,w) dw.\]
For all $\xi$ for which $b(\xi)\neq 0$, $\lambda$ must satisfy the constraint
\begin{equation}
\label{lambda_constraint_eqn}
\int_{\R}\frac{\phi(v)dv}{1+\lambda+\xi v i}=1.
\end{equation}
\end{proposition}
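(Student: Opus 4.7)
The plan is to take the eigenvalue equation at face value and solve algebraically for $B(\xi,v)$ in terms of its moment $b(\xi)$, then apply the moment operator $\int_{\R}\phi(v)\,\cdot\,dv$ to both sides to obtain a scalar constraint that $\lambda$ must satisfy whenever $b(\xi)\neq 0$.

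Concretely, I would first rewrite $L(B)=\lambda B$ using the definition of $L$ in \eqref{L} as
\[
-i\xi v B(\xi,v) - B(\xi,v) + \int_{\R}\phi(w)B(\xi,w)\,dw = \lambda B(\xi,v),
\]
collect the $B(\xi,v)$ terms on one side, and recognize the integral as $b(\xi)$ by the definition \eqref{little b}. Solving algebraically (at values $v$ where the coefficient does not vanish) yields the recursion
\[
B(\xi,v) = \frac{b(\xi)}{1+\lambda+\xi v i},
\]
which is exactly \eqref{recursive_b}. This is the key manipulation and is the same one already carried out informally before the statement of the proposition.

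Next I would multiply this identity by $\phi(v)$ and integrate over $v\in\R$. The left side is $b(\xi)$ by definition, and the right side becomes $b(\xi)\int_{\R}\phi(v)/(1+\lambda+\xi v i)\,dv$, giving the identity
\[
b(\xi) = b(\xi)\int_{\R}\frac{\phi(v)\,dv}{1+\lambda+\xi v i}.
\]
Under the hypothesis $b(\xi)\neq 0$, I divide through to obtain \eqref{lambda_constraint_eqn}.

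The only subtle points are analytical rather than algebraic: one must justify that the scalar $1+\lambda+\xi v i$ does not vanish on a set of positive $\phi$-measure (so that the rearrangement to \eqref{recursive_b} is meaningful as an $L^2_\phi$ statement, not merely a pointwise one), and that the integral $\int_{\R}\phi(v)/(1+\lambda+\xi v i)\,dv$ converges. For the first point, the zero set is at most a single real $v$ (for $\xi\neq 0$), hence $\phi$-null; for $\xi=0$ the denominator is the constant $1+\lambda$, which is nonzero unless $\lambda=-1$, a degenerate case. For the second point, the Gaussian weight $\phi$ ensures absolute convergence of the integral for every $\lambda$ whose real part differs from $-1$, and a principal-value interpretation handles the remaining case. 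These are the only obstacles, and they are mild; the heart of the argument is the one-line algebraic reduction above.
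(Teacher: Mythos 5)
Your proposal is correct and follows exactly the route the paper takes: the paper derives the recursion $B(\xi,v)=b(\xi)/(1+\lambda+\xi v i)$ from the eigenvalue equation, multiplies by $\phi(v)$, integrates, and divides by $b(\xi)\neq 0$ in the text immediately preceding the proposition. Your added remarks on the $\phi$-null zero set of the denominator and convergence of the integral are sound refinements that the paper leaves implicit.
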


\subsection{The Spectrum: The Real-Valued Portion}

\subsubsection{The interval $\mb (-1,0)$}

\begin{proposition}
\label{the_real_spectrum} The interval $(-1,0)\subseteq \R$ is part of the spectrum.
\end{proposition}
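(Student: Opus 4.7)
The plan is to produce, for each $\lambda \in (-1,0)$, a nontrivial solution $B$ to the eigenvalue equation $L(B) = \lambda B$. From the recursion \eqref{recursive_b}, any $B$ of the form $B(\xi,v) = b(\xi)/(1+\lambda+i\xi v)$ will do, provided the constraint \eqref{lambda_constraint_eqn} is satisfied at the chosen $\xi$. The task therefore reduces to the following: for every $\lambda \in (-1,0)$, exhibit a real $\xi$ at which \eqref{lambda_constraint_eqn} holds.

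The first step is to simplify the constraint. Since $\phi(v) = e^{-v^2}/\sqrt{\pi}$ is even in $v$, rationalizing the denominator by multiplying by the conjugate $1+\lambda-i\xi v$ kills the odd imaginary part and leaves
\[
\int_{\R}\frac{\phi(v)\,dv}{1+\lambda+i\xi v} \;=\; (1+\lambda)\int_{\R}\frac{\phi(v)\,dv}{(1+\lambda)^2+\xi^{2} v^{2}} \;=:\; D(\xi,\lambda),
\]
a real-valued, continuous function of $\xi$ that is even in $\xi$ and (for each fixed $\lambda > -1$) strictly decreasing in $|\xi|$, since the integrand is strictly decreasing in $|\xi|$ for a.e.\ $v$.

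Next, for fixed $\lambda \in (-1,0)$ I would examine the two limits of $D(\cdot,\lambda)$. At $\xi = 0$ we have $D(0,\lambda) = 1/(1+\lambda)$, which strictly exceeds $1$ because $0 < 1+\lambda < 1$. By dominated convergence $D(\xi,\lambda) \to 0$ as $|\xi| \to \infty$. The intermediate value theorem then yields a unique positive $\xi^\star = \xi^\star(\lambda)$ with $D(\xi^\star,\lambda) = 1$. At that $\xi^\star$, the function $v \mapsto 1/(1+\lambda+i\xi^\star v)$ is bounded because $|1+\lambda+i\xi^\star v|^{2} \geq (1+\lambda)^{2} > 0$, so it lies in $\mathscr{F}_v$; a direct substitution into \eqref{ss_eigen_equation} combined with $D(\xi^\star,\lambda) = 1$ verifies $L B = \lambda B$. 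Thus each $\lambda \in (-1,0)$ appears as an eigenvalue of $L$ along the slice $\xi = \xi^\star(\lambda)$, placing $(-1,0)$ in the spectrum.

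The main subtlety is conceptual rather than computational: this portion of the spectrum is parameterized by the spatial frequency, producing a one-parameter family of point eigenvalues organized along the curve $\xi = \xi^\star(\lambda)$, a real-valued piece distinct from the familiar Case-type continuous spectrum on the line $\operatorname{Re}(\lambda) = -1$. Because $1+\lambda$ stays strictly positive throughout $(-1,0)$, the eigenfunction is a genuine bounded element of $\mathscr{F}_v$ rather than a distribution, and no principal-value or $\delta$-regularization at a pole is required; the IVT plus monotonicity of $D(\cdot,\lambda)$ is all that is needed.
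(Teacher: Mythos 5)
Your argument is correct and follows essentially the same route as the paper: both rationalize the constraint $\int_{\R}\phi(v)\,dv/(1+\lambda+i\xi v)=1$ down to the real equation $(1+\lambda)\int_{\R}\phi(v)\,dv/\bigl((1+\lambda)^2+\xi^2v^2\bigr)=1$ and show it is solvable exactly when $\lambda\in(-1,0)$. The only difference is organizational: you fix $\lambda$ and produce $\xi^\star(\lambda)$ by monotonicity plus the intermediate value theorem, whereas the paper parameterizes $\lambda=\Lambda(\xi)=-1+\xi\,\Xi^{-1}(\xi)$ and reads off the range of $\Lambda$ from the endpoint limits as $\xi\to 0$ and $\xi\to\sqrt{\pi}^-$ --- the same two limits that drive your IVT argument.
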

\begin{proof}
Assume $\lambda$ is real-valued.  Begin with the left-hand side of constraint equation \eqref{lambda_constraint_eqn}.
\begin{align*}
    \int_{\R}\frac{\phi(v)dv}{1+\lambda+\xi v i} &= \int_{\R}\frac{\phi(v)[(1+\lambda)-(\xi v)i]dv}{(1+\lambda)^2+(\xi v)^2}\\
    \ &= (1+\lambda)\int_{\R}\frac{\phi(v)dv}{(1+\lambda)^2+(\xi v)^2}-\xi i \int_{\R}\frac{v\phi(v)dv}{(1+\lambda)^2+(\xi v)^2}\\
    \ &= (1+\lambda)\int_{\R}\frac{\phi(v)dv}{(1+\lambda)^2+(\xi v)^2} \tag{since the right-hand integrand is odd in  $v$.}
\end{align*}
Therefore, when $\lambda$ is real, the constraint equation for $\lambda$ reduces to \begin{equation}
\label{constraint real}
\int_{\R}\frac{(1+\lambda)\phi(v)dv}{(1+\lambda)^2+(\xi v)^2}=1.
\end{equation}  Note that this equation requires that $\lambda\neq -1$.

In \cite{carty:2016}, the odd invertible function $\Xi:\R\backslash 0 \rightarrow  (-\sqrt{\pi},0)\cup(0,\sqrt{\pi})$ defined
\begin{equation*}
\Xi(\eta):=\int_{\R} \frac{\eta\phi(v)dv}{\eta^2+v^2}
\end{equation*}
was analyzed. (See Figure \ref{xi versus eta}.)

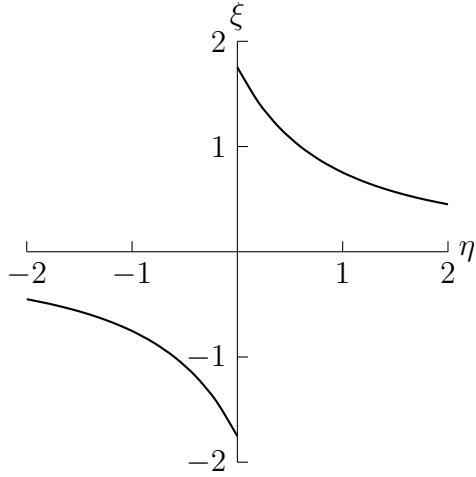
\begin{figure}
  \centering
\begin{tikzpicture}[>=stealth]
\draw (-2*\scale,0*\scale) -- (2*\scale,0*\scale) node[anchor=west]{$\eta$};
\draw (0*\scale,-2*\scale) -- (0*\scale,2*\scale) node[anchor=south]{$\xi$};
\draw (.1*\scale,2*\scale) -- (0*\scale,2*\scale) node[anchor=east]{$2$};
\draw (.1*\scale,1*\scale) -- (0*\scale,1*\scale) node[anchor=east]{$1$};
\draw (.1*\scale,-2*\scale) -- (0*\scale,-2*\scale) node[anchor=east]{$-2$};
\draw (.1*\scale,-1*\scale) -- (0*\scale,-1*\scale) node[anchor=east]{$-1$};
\draw (2*\scale,.1*\scale) -- (2*\scale,0*\scale) node[anchor=north]{$2$};
\draw (1*\scale,.1*\scale) -- (1*\scale,0*\scale) node[anchor=north]{$1$};
\draw (-2*\scale,.1*\scale) -- (-2*\scale,0*\scale) node[anchor=north]{$-2$};
\draw (-1*\scale,.1*\scale) -- (-1*\scale,0*\scale) node[anchor=north]{$-1$};
\draw[thick] plot[smooth] coordinates{(0*\scale, 1.75263*\scale) (0.2*\scale, 1.4198*\scale) (0.4*\scale, 1.17853*\scale) (0.6*\scale, 0.998537*\scale) (0.8*\scale, 0.860816*\scale) (1*\scale, 0.753057*\scale) (1.2*\scale, 0.667063*\scale) (1.4*\scale, 0.597234*\scale) (1.6*\scale, 0.539653*\scale) (1.8*\scale, 0.491519*\scale) (2*\scale, 0.450792*\scale)};
\draw[thick] plot[smooth] coordinates{(0*\scale, -1.75263*\scale) (-0.2*\scale, -1.4198*\scale) (-0.4*\scale, -1.17853*\scale) (-0.6*\scale, -0.998537*\scale) (-0.8*\scale, -0.860816*\scale) (-1*\scale, -0.753057*\scale) (-1.2*\scale, -0.667063*\scale) (-1.4*\scale, -0.597234*\scale) (-1.6*\scale, -0.539653*\scale) (-1.8*\scale, -0.491519*\scale) (-2*\scale, -0.450792*\scale)};
\end{tikzpicture}
\caption{the graph of $ \xi=\Xi(\eta)$} \label{xi versus eta}
\end{figure}

Provided $\xi\neq 0$, we see that $\eqref{constraint real}$ can be rewritten as
\begin{eqnarray*}
    \int_{\R}\frac{\left(\frac{1+\lambda}{\xi}\right)\phi(v)dv}{\left(\frac{1+\lambda}{\xi}\right)^2+v^2}&=& \xi, \text{ or }\\
    \Xi\left(\frac{1+\lambda}{\xi}\right)&=& \xi.
\end{eqnarray*}
This suggests that we should seek to use the the inverse of $\Xi(\eta)$ to rewrite $\lambda$ in terms of $\eta$, and hence $\xi$.
Let $\eta(\xi)=\Xi^{-1}(\xi)$.
Using \eqref{constraint real}, we equate $\eta=\dfrac{1+\lambda}{\xi}$ and find a parametric representation for $\lambda=\Lambda(\xi)$ where $\Lambda(\xi):=-1+\xi \eta(\xi)$, the graph of which is Figure \ref{xi versus lambda}.

\begin{figure}
  \centering
  \includegraphics[width=3in]{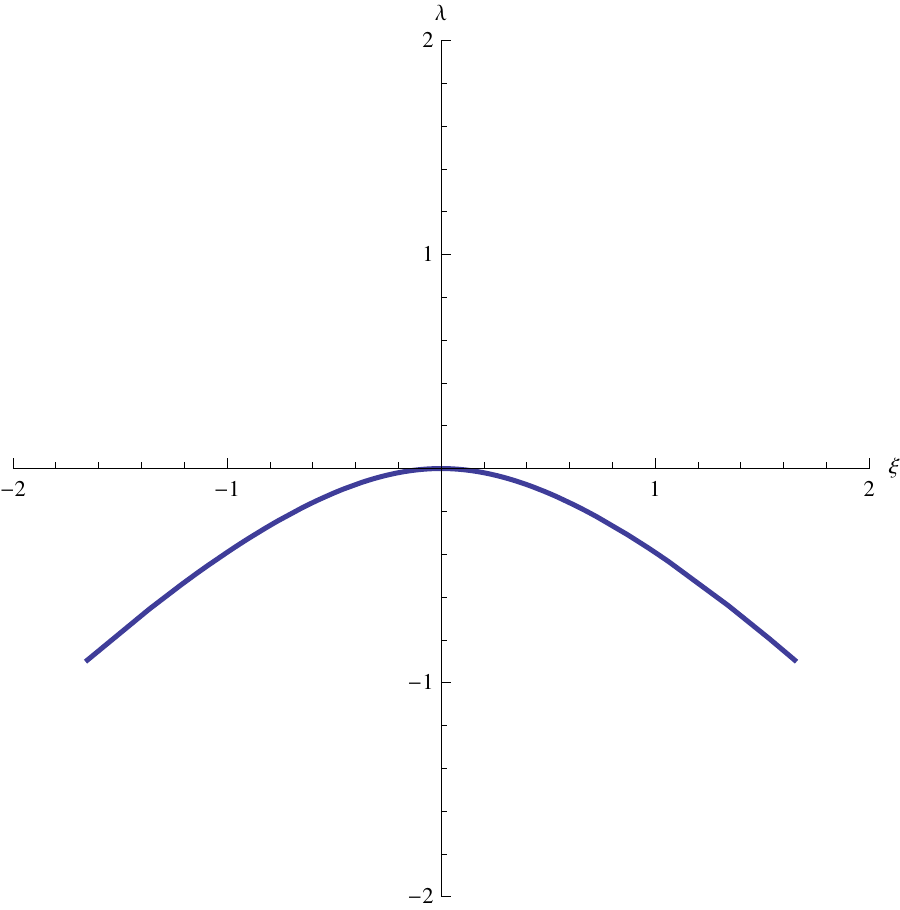}\\
  \caption{the graph of $\lambda=\Lambda(\xi)$} \label{xi versus lambda}
\end{figure}

To formally analyse the range of values $\lambda$ can take and still satisfy the identity \eqref{constraint real}, we look to the range of the function $\Lambda(\xi)$.  Given that $\eta\rightarrow 0^+$, as $\xi \rightarrow \sqrt{\pi}^-$, we see that $\lim_{\xi\rightarrow \sqrt{\pi}^-}(-1+\xi \eta(\xi))=-1$.  To determine the behavior of $\lambda$ as $\xi$ goes to 0, we examine the behavior of $\xi \eta(\xi)$.  Here, it is easier to look at the inverse problem and examine $\Xi(\eta)\eta$.  We have that \[\Xi(\eta) \eta = \left(\int_{\R} \frac{\eta\phi(v)dv}{\eta^2+v^2}\right)=\int_{\R} \frac{\eta^2\phi(v)dv}{\eta^2+v^2}.\]  Hence,
\begin{eqnarray*}
    \lim_{\eta\rightarrow \infty}\Xi(\eta) \eta &=& \lim_{\eta\rightarrow \infty} \int_{\R} \frac{\eta^2\phi(v)dv}{\eta^2+v^2},\\
    \ &=& \lim_{\eta\rightarrow \infty} \int_{\R} \frac{(\eta^2+v^2-v^2)\phi(v)dv}{\eta^2+v^2},\\
    \ &=& \int_{\R} \phi(v)dv -\lim_{\eta\rightarrow \infty} \int_{\R} \frac{v^2\phi(v)dv}{\eta^2+v^2},\\
    \ &=& 1-0.
\end{eqnarray*}  Thus, $\lim_{\eta\rightarrow \infty}(-1+\Xi(\eta) \eta)=0$.  This demonstrates that the real-valued portion of the spectrum is the interval $(-1,0)$.

Since $\eta(\xi)$ is an odd, invertible function, analysis of $-1+\Xi(\eta) \eta$ as $\eta\rightarrow 0^-$ and $\eta\rightarrow -\infty$ results in the same portion of the real axis.
\end{proof}

\subsubsection{Associated Eigendistributions}

At this moment, we appear to have a collection of spectral values that are dependent upon the parameter $\xi$.  We now will free $\lambda$ of its parametric dependence and construct functions akin to eigensolutions.

We know that $\lambda$ and $\xi$ are related by
\begin{equation}
\label{constraint real2}1- \int_{\R}\frac{\phi(v)dv}{1 +\lambda+\xi v i}=0
\end{equation}
and this can be rewritten as $\ds \xi=\Xi\left(\frac{1+\lambda}{\xi}\right).$  This means that $\lambda=-1+\xi \eta(\xi)=\Lambda(\xi)$ where we have a nice graph of $\Lambda$, Figure \ref{xi versus lambda}.  Thus, for any $\xi$ in the domain of $\Lambda$: \begin{equation}
\label{constraint real with Lambda}\int_{\R}\frac{\phi(v)dv}{1 +\Lambda(\xi)+\xi v i}=1
\end{equation} and \eqref{constraint real2} is not valid if $\lambda$ does not have the form $\Lambda(\xi)$ for some $\xi$.

We can see from the graph of $\Lambda$ that for each fixed value of $\lambda$ in the interval $(-1,0)$ there corresponds two values of $\xi$.  In turn, we can identify these corresponding values of $\xi$ with $\lambda$.  Let $E^+(\lambda)$ denote the inverse of $\Lambda(\xi)$ over the restricted domain $\xi\in(0,\sqrt{\pi})$ and let $E^-(\lambda)$ denote the inverse of $\Lambda(\xi)$ over $\xi\in(-\sqrt{\pi},0)$.  In this notation, for each fixed $\lambda$ in $(-1,0)$, $E^+(\lambda)$ and $E^-(\lambda)$ are the corresponding values of $\xi$.  Thus \eqref{constraint real with Lambda} can be rephrased in terms of $\lambda$ rather than $\xi$ as
\begin{equation}
\label{constraints real in lambda}
\int_{\R}\frac{\phi(v)dv}{1 +\lambda+E^+(\lambda) v i}=1 \text{ and } \int_{\R}\frac{\phi(v)dv}{1 +\lambda+E^-(\lambda)  v i}=1
\end{equation}
and \eqref{constraint real2} is not valid if $\xi$ does not have the form $E^+(\lambda)$ or $E^-(\lambda)$ for some $\lambda$ in $(-1,0)$.

We now use these facts and build eigendistributions.  Let $B_{\lambda}$ be the basis function dependent on $\lambda$.  Beginning with \eqref{recursive_B}, the recursive definition of $B(\xi,v)$, we use the definition of $b_{\lambda}(\xi)$ to derive  \begin{equation}
\label{eigenfunction constraint equation}
b_{\lambda}(\xi)\left(1- \int_{\R}\frac{\phi(v)dv}{1 +\lambda+\xi v i}\right)=0.
\end{equation}
From \eqref{constraints real in lambda} we see that $b_{\lambda}(\xi)$ must vanish at all points $\xi$ other than $E^+(\lambda)$ and $E^-(\lambda)$.  Additionally, the value of $b_{\lambda}(\xi)$ is arbitrary at these two points.  This leads us to the two distributional solutions
\begin{equation}
\label{signed b_lambda}
b_{\lambda}^+=\delta(\xi-E^+(\lambda)) \text{ and } b_{\lambda}^-=\delta(\xi-E^-(\lambda)),
\end{equation}
where $\delta(x)$ denotes the Dirac delta distribution.
In general, $b_{\lambda}$ will be an arbitrary superposition of these two distributions.  By \eqref{recursive_B}, we obtain two eigendistributions for each $\lambda$ in $(-1,0)$:
\begin{equation}
\label{real_eigendists}
B_{\lambda}^+(\xi,v)=\frac{\delta(\xi-E^+(\lambda))}{1 +\lambda+\xi v i}\text{ and } B_{\lambda}^-(\xi,v)\frac{\delta(\xi-E^-(\lambda))}{1 +\lambda+\xi v i}
\end{equation}

Our ultimate goal is to create a transform.  That is, we will seek to integrate in $\lambda$ over the interval $(-1,0)$ an arbitrary superposition of the basis functions of the form \[ C^+(\lambda,\xi)\frac{\delta(\xi-E^+(\lambda))}{1 +\lambda+\xi v i}+C^-(\lambda,\xi)\frac{\delta(\xi-E^-(\lambda))}{1 +\lambda+\xi v i}.\]  As posed, the integration requires a change of variables so that the delta distributions are of the form $\delta(\lambda-\Gamma)$ where $\Gamma$ is independent of $\lambda$.  Note that all the action of $\delta(\xi-E^+(\lambda))$ occurs at $\xi=E^+(\lambda)$.  Since $\Lambda(\xi)$ is the inverse of $E^+(\lambda)$, we can equivalently say that all of the action of $\delta(\xi-E^+(\lambda))$ occurs at $\Lambda(\xi)=\lambda$.  This suggests that we redefine $b_{\lambda}(\xi)=\delta(\lambda-\Lambda(\xi))$.  Note that we are not claiming that $\delta(\xi-E^+(\lambda))=\delta(\lambda-\Lambda(\xi))$.  In fact, the required change of variables would induce the Jacobian $\dfrac{1}{|\Lambda'(E^+(\lambda))|}$.  However, since the Jacobian is now purely a function of $\lambda$, it can be subsumed by the arbitrary coefficient function.  Therefore we can define $b_{\lambda}^+(\xi)=\delta(\lambda-\Lambda(\xi))$.  Since $\Lambda(\xi)$ is also the inverse of $E^-(\lambda)$, the same argument concludes that $b_{\lambda}^-(\xi)=\delta(\lambda-\Lambda(\xi))$.  In essence, we obtain just one solution $b_{\lambda}(\xi)=\delta(\lambda-\Lambda(\xi))$ and the one eigendistribution \begin{equation}
\label{real_eigendist}
B_{\lambda}(\xi,v)=\frac{\delta(\lambda-\Lambda(\xi))}{1 +\lambda+\xi v i}.
\end{equation}

A somewhat simpler way to arrive at the same conclusion is to think of \eqref{eigenfunction constraint equation} in a different way.  Rather than viewing this as a problem for a function of $\xi$ depending parametrically on $\lambda$, we can view it as a problem of solving for a function of $\lambda$ depending parametrically on $\xi$.  Given our initial calculations, we conclude immediately that $b_{\lambda}(\xi)$ will be some multiple of $\delta(\lambda-\Lambda(\xi))$ and this leads immediately to \eqref{real_eigendist}.

\begin{theorem}
\label{basis1 thm}
Define $L$ as in \eqref{L}.  Then the distribution $B_{\lambda}$ defined by \eqref{real_eigendist} satisfies the equation $L(B_{\lambda})=\lambda B_{\lambda}$ for each $\lambda \in (-1,0)$.
\end{theorem}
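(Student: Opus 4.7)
The plan is a direct substitution: apply $L$ to $B_\lambda$ term by term, collect the results over a common denominator, and use the constraint equation \eqref{constraint real with Lambda} together with the defining property of the delta distribution to finish.

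First I would compute each piece separately. By definition of $L$,
\begin{equation*}
L(B_\lambda)(\xi,v) = -(1+\xi v i)\,B_\lambda(\xi,v) + \int_{\R} \phi(w) B_\lambda(\xi,w)\, dw.
\end{equation*}
The first term equals $-\,\dfrac{(1+\xi v i)\,\delta(\lambda - \Lambda(\xi))}{1+\lambda+\xi v i}$, and the integral term equals $\delta(\lambda - \Lambda(\xi)) \displaystyle\int_{\R} \dfrac{\phi(w)\, dw}{1 + \lambda + \xi w i}$, where the delta distribution pulls outside of the $w$-integral since it depends only on $\xi$ and $\lambda$.

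Next I would use the constraint. On the support of $\delta(\lambda - \Lambda(\xi))$ we have $\lambda = \Lambda(\xi)$, and Proposition on \eqref{constraint real with Lambda} gives
\begin{equation*}
\int_{\R} \frac{\phi(w)\, dw}{1 + \Lambda(\xi) + \xi w i} = 1.
\end{equation*}
Because the integrand (viewed as a function of $\xi$) is smooth in a neighborhood of the support and the delta distribution tests against smooth functions, we may replace $\lambda$ by $\Lambda(\xi)$ inside the integral. Thus
\begin{equation*}
\delta(\lambda - \Lambda(\xi)) \int_{\R} \frac{\phi(w)\, dw}{1 + \lambda + \xi w i} \;=\; \delta(\lambda - \Lambda(\xi)).
\end{equation*}
Combining,
\begin{equation*}
L(B_\lambda) \;=\; \frac{-(1+\xi v i)\,\delta(\lambda - \Lambda(\xi))}{1+\lambda+\xi v i} + \delta(\lambda - \Lambda(\xi)) \;=\; \frac{(\lambda + 1 + \xi v i - 1 - \xi v i)\,\delta(\lambda - \Lambda(\xi))}{1+\lambda+\xi v i} \;=\; \lambda\, B_\lambda,
\end{equation*}
after rewriting $1 = \dfrac{1+\lambda+\xi v i}{1+\lambda+\xi v i}$ and combining fractions. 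This is exactly the eigenvalue identity.

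The one subtlety to watch is that the multiplier $\frac{1}{1+\lambda+\xi v i}$ is a genuine smooth function of $\xi$ near the support of the delta distribution, so the products involving $\delta(\lambda-\Lambda(\xi))$ are well-defined distributions. Since $\lambda \in (-1,0)$ we have $1+\lambda > 0$, so $|1+\lambda + \xi v i|^2 = (1+\lambda)^2 + (\xi v)^2 > 0$ for all real $\xi, v$, making the denominator nonvanishing and smooth. Given that, the computation above is the whole proof; the ``hard part'' was already absorbed into Proposition on \eqref{constraint real with Lambda}, which guarantees that the constraint is satisfied precisely on the support of $\delta(\lambda-\Lambda(\xi))$, and that is what forces the integral term to equal the delta itself. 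No distributional pairing beyond ``test against smooth functions of $\xi$'' is needed.
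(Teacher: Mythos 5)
Your proof is correct and follows essentially the same route as the paper's: apply $L$ term by term, pull $\delta(\lambda-\Lambda(\xi))$ out of the $w$-integral, invoke the constraint \eqref{constraint real with Lambda} to replace that integral by $1$ on the support of the delta, and combine fractions to get the factor $\lambda$. The extra remarks you add (nonvanishing of $1+\lambda+\xi v i$ for $\lambda\in(-1,0)$ and the smoothness needed to multiply against the delta) are accurate and only sharpen what the paper leaves implicit.
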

\begin{proof}
Notice that
\begin{eqnarray*}
L(B_{\lambda})(\xi,v)&=& L\left(\frac{\delta(\lambda-\Lambda(\xi))}{1 +\lambda+\xi v i}\right)\\
 \ &=& -\xi v i \left(\frac{\delta(\lambda-\Lambda(\xi))}{1 +\lambda+\xi v i}\right)-\left(\frac{\delta(\lambda-\Lambda(\xi))}{1 +\lambda+\xi v i}\right)+\int_{\R} \frac{\phi(w)\delta(\lambda-\Lambda(\xi))dw}{1 +\lambda+\xi w i}\\
\ &=& (-1-\xi v i )\left(\frac{\delta(\lambda-\Lambda(\xi))}{1 +\lambda+\xi v i}\right)+ \delta(\lambda-\Lambda(\xi))\int_{\R} \frac{\phi(w)dw}{1 +\lambda+\xi w i}.
\end{eqnarray*}
The last integral was analysed in the proof of Proposition \ref{the_real_spectrum}.  Moreover, in the derivation of $B_{\lambda}$, it was shown that when $\lambda = \Lambda(\xi)$, the integral is identically 1.  Hence, \begin{align*}
L(B_{\lambda})(\xi,v) &=  (-1-\xi v i )\left(\frac{\delta(\lambda-\Lambda(\xi))}{1 +\lambda+\xi v i}\right)+ \delta(\lambda-\Lambda(\xi))\\
&= \lambda\left(\frac{\delta(\lambda-\Lambda(\xi))}{1 +\lambda+\xi v i}\right)\\
&= \lambda B_{\lambda}(\xi,v). \qedhere
\end{align*}
\end{proof}

\subsection{The Spectrum: The Complex-Valued Portion}

\subsubsection{The line $\boldsymbol{\ell}:=-1+\alpha i$}

The other divisibility condition required in the derivation of the resolvent operator \eqref{resolvent_op} is that $1+\lambda+\xi v i \neq 0$.  This suggest that we should consider spectral values of the form $\lambda = -1- \xi v i$.  Since $\xi$ and $v$ are free variables over $\R$, this continuum of points is more concisely written $\lambda=-1+\alpha i$, $\alpha \in \R$.  We define $\boldsymbol{\ell}$ to be this line in the complex plane.

\subsubsection{Associated Eigendistributions}

We begin again with \eqref{ss_eigen_equation} and let $\lambda \in \boldsymbol{\ell}$.  Then \eqref{recursive_B}, the recursive form of $B(\xi,v)$, becomes  \begin{equation}
\label{recursive_B_v2}
B(\xi,v)=\frac{\ds \int_{\R} \phi(w)B(\xi,w)dw}{(\alpha+\xi v) i}.
\end{equation}
Previously, we converted this equation to an equation in $b(\xi)$ and made use of the properties of the delta distribution.  This time we don't have that luxury.  Multiplying \eqref{recursive_B_v2} by $\phi(v)$ and integrating over $v$-space yields

\[b(\xi)\left(1-\int_{\R}\frac{dv}{(\alpha+\xi v) i}\right)=0.\]  The only solution to this equation is $b(\xi)=0$.

That said, our previous work indicated that we should be searching for a distributional basis function.  Mimicking the technique suggested by Case \cite{case1960elementary} and Cercignani \cite{Cerci(1962),cercignani1966method}, we append to \eqref{recursive_B_v2} the weighted Dirac mass whose action occurs at the singularity caused by $\alpha+\xi v$.  That is,
\begin{align*}
B(\xi,v)&= \frac{\ds \int_{\R} \phi(w)B(\xi,w)dw}{(\alpha+\xi v) i} + K(\xi)\delta(\xi v+\alpha).
\end{align*}
where $K(\xi)$ is a function that will allow this basis candidate to satisfy the definition of $b(\xi)$.

Using the definition of $b(\xi)$, we solve for $K(\xi)$ explicitly. \begin{align*}
b(\xi)&= \int_{\R}\phi(w)B(\xi,w)dw \tag{by definition}\\
\ &= b(\xi)\left(\int_{\R}\frac{\phi(w)dw}{(\alpha +\xi w)i}\right)+\int_{\R} \phi(w)K(\xi)\delta(\xi w+\alpha)\,dw.
\end{align*}  To make sense of the resultant integrals, we need to view them distributionally.  In particular, the first integral is viewed as a Cauchy principal value integral (denoted p.v.). Hence, \begin{align*}
b(\xi)& =b(\xi)\left(\textup{p.v.}\int_{\R}\frac{\phi(w)dw}{(\alpha +\xi w)i}\right)+\int_{\R} \phi(w) K(\xi)\delta(\xi w+\alpha)\,dw\\
b(\xi) &= b(\xi)\left(\textup{p.v.}\int_{\R}\frac{\phi(w)}{i(\xi w +\alpha)}\,dw\right) + K(\xi)\int_{\R}\phi(\beta/\xi)\delta(\beta+\alpha)\,\frac{d\beta}{|\xi|} \tag{where $\beta=\xi v$}\\
b(\xi)&= b(\xi)\left(\textup{p.v.}\int_{\R}\frac{\phi(w)}{i(\xi w +\alpha)}\,dw\right) + K(\xi)\frac{\phi(-\alpha/\xi)}{|\xi|}\\
K(\xi)&= \frac{|\xi| b(\xi)}{\phi(-\alpha/\xi)}\left(1-\textup{p.v.}\int_{\R}\frac{\phi(w)}{i(\xi w +\alpha)}\,dw\right)
\end{align*}
Using the derivation of $K(\xi)$, $B(\xi,v)$ becomes \[B(\xi,v)= \frac{b(\xi)}{i(\xi v +\alpha)} + \frac{|\xi| b(\xi)}{\phi(-\alpha/\xi)}\left(1-\textup{p.v.}\int_{\R}\frac{\phi(w)}{i(\xi w +\alpha)}\,dw\right)\delta(\xi v+\alpha).\]  Note that each term of $B(\xi,v)$ is being multiplied by $b(\xi)$.  As we did before, we can let this be subsumed into the functional coefficient in the transform.  (This has the added benefit of normalizing $B$ with respect to mass-density.)  In other words, for this definition of $B(\xi,v)$, $b(\xi)=1$.   Hence, for each $\lambda\in \boldsymbol{\ell}$, we define the associated eigendistribution
\begin{equation}
\label{complex_eigendist}
B_{\lambda(\alpha)}(\xi,v)=\frac{1}{i(\xi v +\alpha)} + \frac{|\xi|}{\phi(-\alpha/\xi)}\left(1-\textup{p.v.}\int_{\R}\frac{\phi(w)}{i(\xi w +\alpha)}\,dw\right)\delta(\xi v+\alpha).
\end{equation}

\begin{theorem}
\label{basis2 thm}
Define $L$ as in \eqref{L}.  Then for each $\lambda=-1+\alpha i \in \boldsymbol{\ell}$, the distribution $B_{\lambda(\alpha)}$ \eqref{complex_eigendist} satisfies the equation $L(B_{\lambda})=\lambda B_{\lambda}$.
\end{theorem}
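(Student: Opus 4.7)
The plan is to substitute the definition \eqref{complex_eigendist} of $B_{\lambda(\alpha)}$ directly into the equation $L(B_\lambda)=\lambda B_\lambda$ and exploit the fact that $\lambda=-1+\alpha i$ forces $1+\lambda+\xi v i = i(\xi v+\alpha)$. Using the definition of $L$, the identity $L(B_\lambda)-\lambda B_\lambda=0$ is equivalent to
\[
i(\xi v+\alpha)\,B_\lambda(\xi,v)=\int_{\R}\phi(w)\,B_\lambda(\xi,w)\,dw,
\]
so the proof reduces to verifying this single distributional identity.

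First I would compute the left-hand side. The first summand of $B_\lambda$ contributes $i(\xi v+\alpha)\cdot\frac{1}{i(\xi v+\alpha)}=1$. The second summand involves $(\xi v+\alpha)\delta(\xi v+\alpha)$, which vanishes as a distribution. Hence $i(\xi v+\alpha)B_\lambda(\xi,v)=1$ identically.

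Next I would compute the right-hand side, splitting the integral into two pieces along the two terms of $B_\lambda$. The first piece, $\int\phi(w)\,dw/[i(\xi w+\alpha)]$, must be read in the principal-value sense, producing $\textup{p.v.}\int_{\R}\phi(w)\,dw/[i(\xi w+\alpha)]$. For the second piece, I would apply the change of variables $\beta=\xi w$ (the same one used in the paper to derive $K(\xi)$), which yields
\[
\int_{\R}\phi(w)\,\delta(\xi w+\alpha)\,dw=\frac{\phi(-\alpha/\xi)}{|\xi|}.
\]
This cancels the normalizing prefactor $|\xi|/\phi(-\alpha/\xi)$ exactly, leaving $1-\textup{p.v.}\int_{\R}\phi(w)\,dw/[i(\xi w+\alpha)]$. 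Adding the two pieces, the principal-value terms cancel and the total is $1$, matching the left-hand side.

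The calculation is essentially bookkeeping; the only genuine care required is distributional. The main obstacle is justifying the two identities $(\xi v+\alpha)\delta(\xi v+\alpha)=0$ and the Jacobian $1/|\xi|$ in the change of variables for $\delta(\xi v+\alpha)$, and keeping the principal-value interpretation consistent so that the singular integral and the delta contribution are added on the same footing. These are precisely the distributional identities used in the construction of $B_{\lambda(\alpha)}$, so the same regularization that defined the eigendistribution ensures the computation closes. Once both sides of the reduced identity are shown to equal $1$, the theorem follows immediately.
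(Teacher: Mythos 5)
Your proposal is correct and follows essentially the same route as the paper: both arguments reduce the claim to the two distributional facts that $(\xi v+\alpha)\delta(\xi v+\alpha)=0$ and that the normalization of $K(\xi)$ makes $\int_{\R}\phi(w)B_{\lambda(\alpha)}(\xi,w)\,dw=1$. The only cosmetic difference is that you re-derive the normalization identity explicitly while the paper invokes it as ``$b(\xi)=1$'' by construction.
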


\begin{proof}
This is a straight-forward computation using the machinery built above: \begin{align*}
L(B_{\lambda(\alpha)}) &= -\xi v i B_{\lambda(\alpha)}(\xi,v)-B_{\lambda(\alpha)}(\xi,v)+\int_{\R}\phi(w)B_{\lambda(\alpha)}(\xi,w)\,dw\\
&= -B_{\lambda(\alpha)}(\xi,v) -\xi v i B_{\lambda(\alpha)}(\xi,v)+1 \tag{ since  $b(\xi)=1$}\\
&= -B_{\lambda(\alpha)}(\xi,v) +\alpha i B_{\lambda(\alpha)}(\xi,v)- \alpha i B_{\lambda(\alpha)}(\xi,v)-\xi v i B_{\lambda(\alpha)}(\xi,v)+1\\
&= (-1+\alpha i) B_{\lambda(\alpha)}(\xi,v)-(\alpha + \xi v)i B_{\lambda(\alpha)}(\xi,v)+1\\
&= (-1+\alpha i) B_{\lambda(\alpha)}(\xi,v)+\left[-1-K(\xi)(\alpha+\xi v)\delta(\xi v+\alpha)\right]+1\\
&= (-1+\alpha i) B_{\lambda(\alpha)}(\xi,v)+[-1+0]+1 \tag{ since  $x\delta(x)=0$}\\
&= (-1+\alpha i) B_{\lambda(\alpha)}(\xi,v). \qedhere
\end{align*}

\end{proof}

\subsubsection{The Spectral Value $\lambda=0$}

We have identified the line $\boldsymbol{\ell}$ and the interval $(-1,0)$ as belonging to the spectrum of $L$.  For a ``nice enough" operator, we would expect the complement of the resolvent set to be closed \cite{Kress}.  However, currently the union of our spectral pieces are not a closed set.  We will close $\boldsymbol{\ell}\cup (-1,0)$ by demonstrating that $\lambda=0$ is a spectral value.

Recall that by construction of the PIDE, $\ds \int_{\R}\phi(v)dv=1$.  By \eqref{constraint real}, when $\lambda=0$ we have \[\int_{\R}\dfrac{\phi(v)dv}{1+\xi^2v^2}=1.\]  It is clear that this identity will only hold when $\xi=0$.  Now consider the graph of $\Lambda(\xi)$, Figure \ref{xi versus lambda}.  This suggests that we should be able to redefine $\Lambda(\xi)$ continuously by including the point $(0,0)$.  With this extension, equation \eqref{eigenfunction constraint equation} becomes \[b_0(\xi)\left[1-\int_{\R}\dfrac{\phi(v)dv}{1+\xi^2v^2}\right]=0\] and we get the distributional solution $b_0(\xi)=\delta(\lambda - \Lambda(0))$.  Now the eigendistribution \eqref{real_eigendist} can be extended to a basis function on the half-open interval $\lambda\in(-1,0]$.  Hence, we have closed the spectrum.

\subsection{The Resolvent Set}

We now take a moment to show that we have in fact captured the spectrum of the operator $L$.

\begin{theorem}
Define $\mb{S}=\boldsymbol{\ell}\cup(-1,0]$.  For $\lambda \in \C/\mb{S}$, the operator $(L-\lambda)^{-1}$ over $\mathscr{F}$ defined by \eqref{resolvent_op} is bounded.  In other words, $\C/\mb{S}$ is in the resolvent set of $L$.
\end{theorem}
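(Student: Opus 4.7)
For $\lambda = a + bi \in \C\setminus\mb{S}$, the goal is to show that the formal expression in \eqref{resolvent_op} actually defines a bounded operator on $\mathscr{F}$. The strategy is to reduce the question to two uniform lower bounds,
\begin{equation*}
    |1+\lambda+\xi v i|\geq c_1(\lambda)>0 \qquad \text{and} \qquad |1-A(\xi,\lambda)|\geq c_2(\lambda)>0,
\end{equation*}
for all $(\xi,v)\in\R^2$ and all $\xi\in\R$ respectively, where $A(\xi,\lambda):=\int_\R \phi(v)/(1+\lambda+\xi vi)\,dv$. Once these are in place, Cauchy--Schwarz inside \eqref{resolvent_op} immediately yields the operator norm bound.

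The first estimate is trivial: $|1+\lambda+\xi vi|^2 = (1+a)^2 + (b+\xi v)^2 \geq (1+a)^2$, and the hypothesis $\lambda\notin\boldsymbol{\ell}$ is exactly $a\neq -1$, so $c_1 = |1+a|$ works. The second estimate is the main obstacle. I would first establish the pointwise statement $A(\xi,\lambda)\neq 1$ for every $\xi\in\R$, in three cases. At $\xi=0$, $A(0,\lambda)=1/(1+\lambda)$, which equals $1$ only when $\lambda=0\in\mb{S}$. For $\xi\neq 0$ with $\lambda$ real, Proposition \ref{the_real_spectrum} already showed that the solutions of $A(\xi,\lambda)=1$ lie in the interval $(-1,0)\subset\mb{S}$. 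The new case is $\xi\neq 0$ and $b\neq 0$: after substituting $u=b+\xi v$ and decomposing the shifted Gaussian $\phi((u-b)/\xi)$ into its parts even and odd in $u$, only the odd part contributes to the imaginary part, giving (up to a positive constant depending on $\lambda,\xi$)
\begin{equation*}
    \mathrm{Im}(A(\xi,\lambda)) \;\propto\; -\mathrm{sgn}(\xi)\int_\R \frac{u\,e^{-u^2/\xi^2}\sinh(2ub/\xi^2)}{(1+a)^2+u^2}\,du,
\end{equation*}
whose integrand has fixed sign equal to $\mathrm{sgn}(b)$ since $u\sinh(ku)\geq 0$ for all $k\in\R$. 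Thus $\mathrm{Im}(A(\xi,\lambda))\neq 0$, and in particular $A(\xi,\lambda)\neq 1$.

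To upgrade this pointwise non-equality to the uniform bound, I would observe that $\xi\mapsto A(\xi,\lambda)$ is continuous (dominated convergence, using $c_1$ as the majorant) and that $A(\xi,\lambda)\to 0$ as $|\xi|\to\infty$. Consequently the image of $\R$, together with the limit point $0$, is a compact subset of $\C\setminus\{1\}$ and hence sits at positive distance $c_2(\lambda)$ from $1$.

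With both bounds in hand, Cauchy--Schwarz gives $\left|\int_\R \phi(v)h(\xi,v)/(1+\lambda+\xi vi)\,dv\right|^2 \leq c_1^{-2}\|h(\xi,\cdot)\|_{2,\phi}^2$; feeding this into \eqref{resolvent_op} and integrating in $\xi$ yields $\|(L-\lambda)^{-1}h\|_{2,\phi}\leq C(\lambda)\|h\|_{2,\phi}$ for an explicit $C(\lambda)$ depending only on $c_1,c_2$. A short verification confirms that $(L-\lambda)^{-1}h$ inherits Schwartz decay from $h$, so it lies in $\mathscr{F}$: multiplication by the smooth bounded factor $(1+\lambda+\xi vi)^{-1}$ preserves the required decay in $(\xi,v)$, and the additive correction $H(\xi,\lambda)/D(\xi,\lambda)$ decays rapidly in $\xi$ because $H$ is a smooth function of $\xi$ with rapid decay inherited from $h$. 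The hardest part of the argument is the imaginary-part calculation for non-real $\lambda$, since the symmetry trick used in Proposition \ref{the_real_spectrum} for real $\lambda$ no longer kills the odd piece automatically.
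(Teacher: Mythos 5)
Your proposal is correct and follows the same skeleton as the paper's proof --- bound $|1+\lambda+\xi v i|$ from below by the distance from $\lambda$ to $\boldsymbol{\ell}$, bound the perturbation determinant $|1-A(\xi,\lambda)|$ away from zero, then apply Cauchy--Schwarz --- but you do substantially more work at the second step, and that work is not superfluous. The paper defines $\gamma = 1-\int_{\R}(1+\lambda)\phi(v)\,dv/((1+\lambda)^2+(\xi v)^2)$ and treats it as a nonzero constant by citing Proposition \ref{the_real_spectrum}; that formula is only the correct evaluation of $\int_{\R}\phi(v)\,dv/(1+\lambda+\xi v i)$ when $\lambda$ is real, and in any case the quantity depends on $\xi$, so a uniform-in-$\xi$ lower bound is needed and not actually supplied there. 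Your three-case pointwise analysis --- in particular the imaginary-part computation via the substitution $u=b+\xi v$ when $\mathrm{Im}\,\lambda\neq 0$ --- together with the continuity-plus-decay compactness argument closes exactly this gap. One small correction: $u\sinh(ku)\geq 0$ holds only for $k\geq 0$; what you actually want is that $u\sinh(ku)=ku^2\cdot\tfrac{\sinh(ku)}{ku}$ has the sign of $k$, which still gives your stated conclusion that the integrand has fixed sign $\mathrm{sgn}(b)$ and hence that $\mathrm{Im}\,A(\xi,\lambda)\neq 0$. Your closing verification that $(L-\lambda)^{-1}h$ remains in $\mathscr{F}$ is likewise a point the paper passes over in silence.
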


\begin{proof}
Fix $\lambda \in \C/\mb{S}$.  Define $d$ to be the distance between the point $\lambda$ and the line $\boldsymbol{\ell}$; $d=\text{dist}(\lambda,\boldsymbol{\ell})$.  Additionally, for $\lambda\not\in (-1,0]$, the proof of Prop \ref{the_real_spectrum} shows that
\[\int_{\R}\frac{(1+\lambda)\phi(v)dv}{(1+\lambda)^2+(\xi v)^2}\neq 1.\]
Hence, we can define the constant $\gamma$ to be
\[\gamma = 1-\int_{\R}\frac{(1+\lambda)\phi(v)dv}{(1+\lambda)^2+(\xi v)^2}.\]
Then,
\begin{align*}
\|(L-\lambda)^{-1}h(\xi,v)\|_{2,\phi} &= \left\| \frac{-1}{1 + \lambda+\xi v i}\left(\ds h(\xi,v)+\frac{\ds \int_{\R}\frac{h(\xi,v)\phi(v)dv}{1 + \lambda+\xi v i}}{\ds 1-\int_{\R}\frac{\phi(v)dv}{1 + \lambda+\xi v i}}\right)\right\|_{2,\phi}\\
&\leq \dfrac{1}{d}\left(\|h(\xi,v)\|_{2,\phi}+\dfrac{1}{|\gamma|}\left\|\ds \int_{\R}\frac{h(\xi,v)\phi(v)dv}{1 + \lambda+\xi v i}\right\|_{2,\phi}\right)\\
&\leq \dfrac{1}{d}\left(\|h(\xi,v)\|_{2,\phi}+\dfrac{1}{|\gamma| d}\left\|\ds \int_{\R} h(\xi,v)\phi(v)dv\right\|_{2,\phi}\right)\\
&\leq \dfrac{1}{d}\left(1+\dfrac{1}{|\gamma| d}\right)\|h(\xi,v)\|_{2,\phi}
\end{align*}
Hence $\mb{S}$ is the continuous spectrum of $L$.
\end{proof}

\subsection{The Transform Candidate}

Again letting $\mb{S}=\boldsymbol{\ell}\cup(-1,0]$, we seek to represent functions in the form \[\hat{f}(\xi,v)=\int_{\mb{S}}C(\lambda,\xi)B_{\lambda}(\xi,v)\,d\lambda.\]
By the preceding, we have \begin{equation}
\label{spec candidate} \hat{f}(\xi,v)=\int_{(-1,0]}C(\lambda,\xi)B_{\lambda}(\xi,v)\,d\lambda+\int_{\boldsymbol{\ell}}K(\lambda(\alpha),\xi)B_{\lambda(\alpha)}(\xi,v)\,d\alpha.
\end{equation}  We now examine these two integrals independently.

For $\lambda\in(-1,0]$, the integration is straight-forward as our basis is a delta distribution in $\lambda$: \begin{align}
\nonumber
\int_{(-1,0]}C(\lambda,\xi)B_{\lambda}(\xi,v)\,d\lambda &= \int_{(-1,0]}C(\lambda,\xi)\frac{\delta(\lambda-\Lambda(\xi))}{1 +\lambda+\xi v i}\,d\lambda \tag{by \eqref{real_eigendist}}\\
\nonumber &= \dfrac{C(\Lambda(\xi),\xi)}{1 +\Lambda(\xi)+\xi v i}\\
\label{simplified basis1}&= \dfrac{C_\Lambda(\xi)}{1 +\Lambda(\xi)+\xi v i}
\end{align}
where $C_\Lambda(\xi):=C(\Lambda(\xi),\xi)$.

For $\lambda\in\boldsymbol{\ell}$, the integration is a little more technical.  We already have a parametrization for $\boldsymbol{\ell}$ in terms of $\alpha$ and $B_{\lambda(\alpha)}$ in terms of the same parameter.  So \begin{align*}
\int_{\boldsymbol{\ell}}K(\lambda(\alpha),\xi)&B_{\lambda(\alpha)}(\xi,v)\,d\alpha\\
 &= \int_{\R}\tilde{K}(\alpha,\xi)B_{\lambda(\alpha)}(\xi,v)\,d\alpha \tag{where $\tilde{K}(\alpha,\xi):=K(-1+\alpha i, \xi)$}\\
 &= \int_{\R}\tilde{K}(\alpha,\xi)\left[\frac{1}{i(\xi v +\alpha)} + \frac{|\xi|}{\phi(-\alpha/\xi)}\left(1-\textup{p.v.}\int_{\R}\frac{\phi(w)}{i(\xi w +\alpha)}\,dw\right)\delta(\xi v+\alpha)\right]\,d\alpha
 \tag{by \eqref{complex_eigendist}}\\
&= \text{p.v.} \int_{\R}\frac{\tilde{K}(\alpha,\xi)d\alpha}{i(\xi v +\alpha)} + \frac{|\xi|}{\phi(v)}\left(1-\textup{p.v.}\int_{\R}\frac{\phi(w)}{i(\xi w -\xi v)}\,dw\right)\tilde{K}(-\xi v, \xi).
\end{align*}
In the first integral, make the change of variables $\alpha=-\xi w$ to get \begin{align}
\nonumber \int_{\boldsymbol{\ell}}K(\lambda(\alpha),\xi)&B_{\lambda(\alpha)}(\xi,v)\,d\alpha\\
\nonumber &= \text{p.v.} \int_{\R}\frac{\tilde{K}(-\xi w,\xi)|\xi|dw}{i(\xi v -\xi w)} + \frac{|\xi|}{\phi(v)}\left(1-\textup{p.v.}\int_{\R}\frac{\phi(w)}{i(\xi w -\xi v)}\,dw\right)\tilde{K}(-\xi v, \xi)\\
\label{simplified basis2}
&= -\text{p.v.} \int_{\R}\frac{K_{\xi}(w)dw}{i(w-v)} + \frac{1}{\phi(v)}\left(\xi-\textup{p.v.}\int_{\R}\frac{\phi(w)}{i( w - v)}\,dw\right)K_{\xi}( v)
\end{align}
where $\ds K_{\xi}(v):=\frac{|\xi|}{\xi}\tilde{K}(-\xi v,\xi)$.
Therefore, the integral transform associated with the spectral decomposition is \begin{equation}
\label{transform}
\hat{f}(\xi,v)=\dfrac{C_\Lambda(\xi)}{1 +\Lambda(\xi)+\xi v i}-\text{p.v.} \int_{\R}\frac{K_{\xi}(w)dw}{i(w-v)} + \frac{1}{\phi(v)}\left(\xi-\textup{p.v.}\int_{\R}\frac{\phi(w)}{i( w - v)}\,dw\right)K_{\xi}( v)
\end{equation}

\subsubsection{Applying the Operator to the Transform Candidate}

Ultimately our goal is to use our spectral decomposition to solve for the general solution to $\hat{f}_t(t,\xi,v)=L\hat{f}$.  It will be important to have a representation of $L\hat{f}$ in terms of the spectral coefficients $C_\Lambda(\xi)$ and $K_{\xi}( v)$.  Since the computation of $L\hat{f}$ closely mirrors the computations above, we will prove the action of $L$ on our decomposition here.

\begin{corollary}
\label{cor L on basis1}
Let $L$ be defined by \eqref{L} and $B_{\lambda}$ defined by \eqref{real_eigendist}.  Then \begin{equation*}
L\left(\int_{(-1,0]}C(\lambda,\xi)B_{\lambda}(\xi,v)\,d\lambda\right) = \Lambda(\xi) \dfrac{C_\Lambda(\xi)}{1 +\Lambda(\xi)+\xi v i}.
\end{equation*}
\end{corollary}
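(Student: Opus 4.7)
The plan is to exploit linearity of $L$ (it is a differential-times-multiplication plus an integral operator, all linear in its argument) in order to pass it inside the integration over $\lambda$. Once inside, Theorem \ref{basis1 thm} replaces $L(B_\lambda)$ by $\lambda B_\lambda$, after which the delta-distributional form of $B_\lambda$ from \eqref{real_eigendist} collapses the integral in $\lambda$ to a single point evaluation at $\lambda = \Lambda(\xi)$.

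Concretely, I would proceed as follows. First, write
\[
L\!\left(\int_{(-1,0]}C(\lambda,\xi)B_{\lambda}(\xi,v)\,d\lambda\right) = \int_{(-1,0]}C(\lambda,\xi)\,L(B_{\lambda})(\xi,v)\,d\lambda,
\]
where the interchange is justified because $L$ acts only in $v$ (via multiplication by $-1-\xi v i$) and in $w$ through the velocity integral, both of which commute with the outer integration in $\lambda$. Next, invoke Theorem \ref{basis1 thm} to substitute $L(B_{\lambda}) = \lambda B_{\lambda}$, giving
\[
\int_{(-1,0]}\lambda\, C(\lambda,\xi)\,\frac{\delta(\lambda-\Lambda(\xi))}{1+\lambda+\xi v i}\,d\lambda.
\]
Finally, the sifting property of the Dirac distribution evaluates this at $\lambda=\Lambda(\xi)$, yielding exactly $\Lambda(\xi)\,C_\Lambda(\xi)/(1+\Lambda(\xi)+\xi v i)$, which is the claimed identity.

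As a sanity check, one can verify the formula directly from the reduced representation \eqref{simplified basis1}. Applying $L$ to $g(\xi,v):=C_\Lambda(\xi)/(1+\Lambda(\xi)+\xi v i)$ gives
\[
L(g) = -(1+\xi v i)\,\frac{C_\Lambda(\xi)}{1+\Lambda(\xi)+\xi v i} + C_\Lambda(\xi)\int_{\R}\frac{\phi(w)\,dw}{1+\Lambda(\xi)+\xi w i},
\]
and by the defining identity \eqref{constraint real with Lambda} the integral equals $1$, so the two terms combine into $\Lambda(\xi)\,C_\Lambda(\xi)/(1+\Lambda(\xi)+\xi v i)$, confirming the result.

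There is no real obstacle here beyond bookkeeping: the only subtle step is the interchange of $L$ with the $\lambda$-integral, which is unproblematic because $B_\lambda$ is a delta distribution in $\lambda$ and the outer integration is, in effect, merely a point evaluation. The direct computation on \eqref{simplified basis1} in fact bypasses the interchange entirely, so the corollary follows immediately from the already-established eigendistribution property together with the constraint equation that defines $\Lambda$.
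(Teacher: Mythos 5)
Your proposal is correct and follows essentially the same route as the paper: linearity of $L$ to pass under the $\lambda$-integral, Theorem \ref{basis1 thm} to replace $L(B_\lambda)$ by $\lambda B_\lambda$, and the sifting property of $\delta(\lambda-\Lambda(\xi))$ to collapse the integral as in \eqref{simplified basis1}. Your additional direct verification via \eqref{constraint real with Lambda} is a nice cross-check but not part of the paper's argument.
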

\begin{proof}
\begin{align*}
L\left(\int_{(-1,0]}C(\lambda,\xi)B_{\lambda}(\xi,v)\,d\lambda\right) &= \int_{(-1,0]}C(\lambda,\xi)L(B_{\lambda}(\xi,v))\,d\lambda \\
&=  \int_{(-1,0]}C(\lambda,\xi)\lambda B_{\lambda}(\xi,v))\,d\lambda \tag{by Theorem \ref{basis1 thm}}\\
&= \int_{(-1,0]}C(\lambda,\xi)\lambda \frac{\delta(\lambda-\Lambda(\xi))}{1 +\lambda+\xi v i}\,d\lambda \tag{by \eqref{real_eigendist}}\\
&= \Lambda(\xi) \dfrac{C_\Lambda(\xi)}{1 +\Lambda(\xi)+\xi v i} \tag{by computation similar to \eqref{simplified basis1}}
\end{align*}
\end{proof}

\begin{corollary}
\label{cor L on basis2}
Let $L$ be defined by \eqref{L} and  $B_{\lambda(\alpha)}$ is defined by \eqref{complex_eigendist}.  Then \begin{align*}
L&\left(\int_{\boldsymbol{\ell}}K(\lambda(\alpha),\xi)B_{\lambda(\alpha)}(\xi,v)\,d\alpha\right)\\
 &= -\textup{p.v.} \int_{\R}\frac{(-1-\xi w i)K_{\xi}(w)dw}{i(w-v)} + \frac{1}{\phi(v)}\left(\xi-\textup{p.v.}\int_{\R}\frac{\phi(w)}{i( w - v)}\,dw\right)(-1-\xi v i)K_{\xi}(v).
\end{align*}
\end{corollary}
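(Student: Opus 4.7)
The plan is to reduce this computation to the result of Theorem \ref{basis2 thm} together with the same sort of change-of-variables manipulation that produced \eqref{simplified basis2}. By linearity of $L$ and the fact that $L(B_{\lambda(\alpha)}) = (-1+\alpha i)B_{\lambda(\alpha)}$ for each $\alpha \in \R$, I would first pull $L$ inside the integral and write
\begin{equation*}
L\left(\int_{\boldsymbol{\ell}}K(\lambda(\alpha),\xi)B_{\lambda(\alpha)}(\xi,v)\,d\alpha\right) = \int_{\R}\tilde{K}(\alpha,\xi)(-1+\alpha i)B_{\lambda(\alpha)}(\xi,v)\,d\alpha,
\end{equation*}
using the parametrization $\alpha \mapsto -1+\alpha i$ of $\boldsymbol{\ell}$ and $\tilde{K}(\alpha,\xi):=K(-1+\alpha i,\xi)$ as in the derivation of \eqref{simplified basis2}.

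Next I would substitute the explicit formula \eqref{complex_eigendist} for $B_{\lambda(\alpha)}$ and split the integrand into the principal-value piece and the Dirac piece. The key observation is that the extra factor $(-1+\alpha i)$ behaves differently in the two pieces. In the Dirac piece, the distributional identity $\alpha\, \delta(\xi v + \alpha) = -\xi v\, \delta(\xi v + \alpha)$ converts $(-1+\alpha i)$ into the constant $(-1-\xi v i)$ under the integral sign; in the principal-value piece the factor $(-1+\alpha i)$ is simply carried along and, after the change of variables $\alpha = -\xi w$, becomes $(-1-\xi w i)$.

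Concretely, after the substitution $\alpha = -\xi w$ (and using $K_{\xi}(v) = \tfrac{|\xi|}{\xi}\tilde{K}(-\xi v,\xi)$ exactly as before), the first piece yields
\begin{equation*}
-\textup{p.v.}\int_{\R}\frac{(-1-\xi w i)K_{\xi}(w)\,dw}{i(w-v)},
\end{equation*}
while the Dirac piece, evaluated at $\alpha = -\xi v$, produces the factor $(-1-\xi v i)$ outside the coefficient already computed in \eqref{simplified basis2}, giving
\begin{equation*}
\frac{1}{\phi(v)}\left(\xi-\textup{p.v.}\int_{\R}\frac{\phi(w)}{i(w-v)}\,dw\right)(-1-\xi v i)K_{\xi}(v).
\end{equation*}
Adding these two terms gives the claimed formula.

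The only genuine subtlety is justifying the interchange of $L$ with the integral over $\boldsymbol{\ell}$ and the distributional manipulations of the p.v. integrals when multiplied by the polynomial factor $(-1+\alpha i)$; the polynomial growth is harmless because the Schwartz-class decay built into $\mathscr{F}$ tames $\tilde{K}$ in $\alpha$, and the p.v. is preserved since $(-1+\alpha i)$ is smooth at the singularity $\alpha=-\xi v$. Everything else is mechanical bookkeeping that mirrors the derivation of \eqref{simplified basis2}.
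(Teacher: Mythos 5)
Your proposal is correct and follows essentially the same route as the paper: apply Theorem \ref{basis2 thm} under the integral sign and repeat the change of variables $\alpha=-\xi w$ that produced \eqref{simplified basis2}, with the Dirac mass converting $\alpha$ into $-\xi v$. The only cosmetic difference is that the paper splits $(-1+\alpha i)$ into $-1$ and $\alpha i$ and sums two integrals (the first being exactly \eqref{simplified basis2}), whereas you carry the full factor through at once; the computations are identical.
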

\begin{proof}
\begin{align*}
L\left(\int_{\boldsymbol{\ell}}K(\lambda(\alpha),\xi)B_{\lambda(\alpha)}(\xi,v)\,d\alpha\right) &= \int_{\boldsymbol{\ell}}K(\lambda(\alpha),\xi)L\left(B_{\lambda(\alpha)}(\xi,v)\right)\,d\lambda\\
&=  \int_{\boldsymbol{\ell}}K(\lambda(\alpha),\xi)(-1+\alpha i) B_{\lambda(\alpha)}(\xi,v))\,d\lambda \tag{by Theorem \ref{basis2 thm}}\\
&= -\int_{\boldsymbol{\ell}}K(\lambda(\alpha),\xi)B_{\lambda(\alpha)}(\xi,v))\,d\lambda+i\int_{\boldsymbol{\ell}}K(\lambda(\alpha),\xi)\alpha B_{\lambda(\alpha)}(\xi,v))\,d\lambda
\end{align*}
The first integral has been computed above as \eqref{simplified basis2}.  Similarly,
\begin{align*}
i \int_{\boldsymbol{\ell}}K(\lambda(\alpha),\xi)&\alpha B_{\lambda(\alpha)}(\xi,v)\,d\alpha\\
 &= -\text{p.v.} \int_{\R}\frac{(\xi w i)K_{\xi}(w)dw}{i(w-v)} + \frac{1}{\phi(v)}\left(\xi-\textup{p.v.}\int_{\R}\frac{\phi(w)}{i( w - v)}\,dw\right)(-\xi v i)K_{\xi}(v).
\end{align*}
Summing these two integral yields the desired result.
\end{proof}

\subsection{Solving the Singular Integral Equation}

We begin with the observation that the principal value integrals in \eqref{transform} are multiples of the Hilbert transforms of $K_{\xi}(w)$ and $\phi(w)$, respectively.  Numerically, the Hilbert transform of $e^{-x^2}$ is well understood in terms of the Dawson function $D(y)$ where $\ds D(y)=e^{-y^2}\int_0^y e^{x^2}\,dx$ \cite{Dawson}, see Figure \ref{Dawson fcn}.
\begin{figure}
  \centering
  \includegraphics[width=3in]{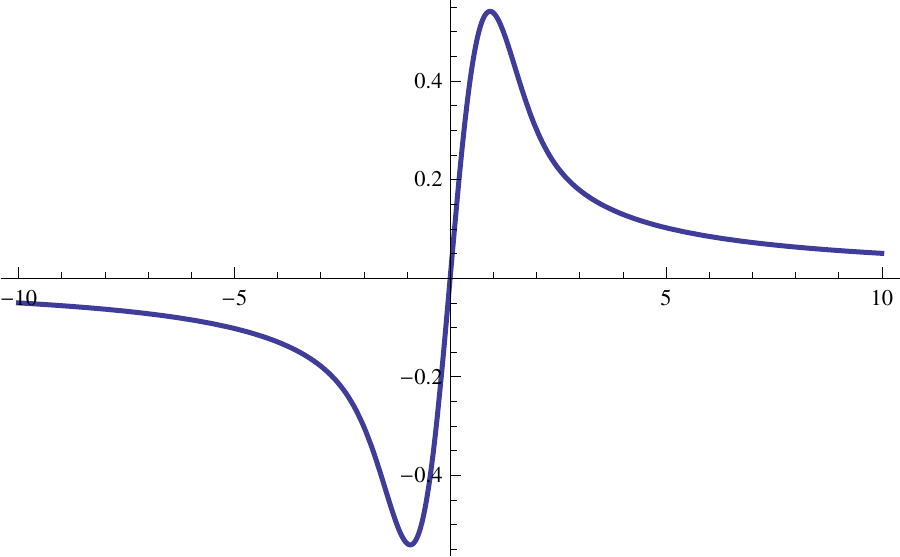}\\
  \caption{The graph of the Dawson function} \label{Dawson fcn}
\end{figure}
Specifically: \[\textup{p.v.}\int_{\R}\frac{\phi(w)}{i( w - v)}\,dw=2iD(v).\]  We now manipulate \eqref{transform} into the standard form for a Carleman type singular integral equation \cite{Estrada}:
\begin{align*}
\hat{f}(\xi,v)-\dfrac{C_\Lambda(\xi)}{1 +\Lambda(\xi)+\xi v i} &= -\text{p.v.} \int_{\R}\frac{K_{\xi}(w)dw}{i(w-v)} + \frac{1}{\phi(v)}\left(\xi-2iD(v)\right)K_{\xi}( v),
\end{align*}
\begin{align}
\label{SIE1}
\phi(v)\left(\hat{f}(\xi,v)-\dfrac{C_\Lambda(\xi)}{1 +\Lambda(\xi)+\xi v i}\right) &= \left(\xi-2iD(v)\right)K_{\xi}( v)+\dfrac{-\pi \phi(v)}{\pi i}\text{p.v.} \int_{\R}\frac{K_{\xi}(w)dw}{w-v}.
\end{align}
In order to simplify the following analysis, we make the following notational changes:
\begin{equation}
\label{SIE2}
F_{\xi}(v)=A_{\xi}(v)K_{\xi}(v)+\dfrac{B(v)}{\pi i}\text{p.v.} \int_{\R}\frac{K_{\xi}(w)dw}{w-v},
\end{equation}
where $A_{\xi}(v):= \xi-2iD(v)$, $B(v) := -\pi \phi(v)$, and
\begin{align*}
F_{\xi}(v) &:= \phi(v)\left(\hat{f}(\xi,v)-\dfrac{C_\Lambda(\xi)}{1 +\Lambda(\xi)+\xi v i}\right).
\end{align*}

By the definitions of $\phi(v)$ and the Dawson function $D(v)$, we know that $A_{\xi}(v)$ and $B(v)$ are H\"{o}lder continuous functions in $v$.  We will require $\hat{f}(\xi,v)$ be such that $F_{\xi}(v)$ is H\"{o}lder continuous as well.

\subsubsection{Reduction to an Associated Riemann Problem}

Solving \eqref{SIE2} requires converting the equation into its equivalent Riemann problem \cite{Gakhov2nd}, \cite{Musk}.  Using the Sokhotski-Plemelj formulas, an equivalent representation of this problem is to seek a sectionally analytic function $\Phi_{\xi}(v)$ satisfying the boundary condition \begin{equation}
\label{Riemann}
\Phi_{\xi}^+(v) = G_{\xi}(v)\Phi_{\xi}^-(v)+g_{\xi}(v)
\end{equation} where $G_{\xi}(v)=\dfrac{A_{\xi}(v)-B(v)}{A_{\xi}(v)+B(v)}$ and $g_{\xi}(v)=\dfrac{F_{\xi}(v)}{A_{\xi}(v)+B(v)}$  on the real-axis in the complex plane.  Solvability of this Riemann problem begins with the conditions that $G_{\xi}(v)$ and $g_{\xi}(v)$ are defined and non-vanishing on the real-line.  Equivalently, we need $A_{\xi}(v)-B(v)\neq 0$ and $A_{\xi}(v)+B(v)\neq 0$.  Note that $A_{\xi}(v)-B(v)=\xi+\pi\phi(v)-2iD(v)$.  On $(-\infty, \infty)$, $D(v)$ vanishes only at $v=0$.  When $v=0$, $A_{\xi}(0)-B(0)=\xi+\pi\phi(0)=\xi+\sqrt{\pi}$.  Hence $A_{\xi}(v)-B(v)$ vanishes when $v=0$ and $\xi=-\sqrt{\pi}$.  An equivalent computation shows that $A_{\xi}(v)+B(v)$ vanishes when $v=0$ and $\xi=\sqrt{\pi}$.  For the remainder of this discussion, we will assume that the parameter $\xi\neq \pm\sqrt{\pi}$.

In the classical theory, the solution(s) to the Riemann problem \begin{equation}
\label{Riemann general}
\Phi^+(z) = G(z)\Phi^-(z)+g(z)
\end{equation} are constructed for boundary problems with finite simple (often closed) boundary curve $\mb{\gamma}$ in the complex plane.  The representation of the problem's solution is dependent on the problem's index.  Define \begin{equation}
\label{chi}
\chi= \text{Ind }G(z)=\dfrac{1}{2\pi}[\text{arg }G(z)]_{\mb{\gamma}}
\end{equation} where $[\circ ]_{\mb{\gamma}}$ denotes the increment of the expression in the brackets as the result of one traversal along $\mb{\gamma}$.  In other words, $\chi$ is the winding number of the image of the boundary curve $\mb{\gamma}$ in $\C$ under the map $G(z)$.
Let $X(z)$ be the solution to the associated homogeneous problem
\[X^+(z) = G(z)X^-(z).\]
When $\chi\geq 0$, the general solution is a certain particular solution of \eqref{Riemann general} plus a summation of $\chi$ linearly independent solutions of the homogeneous problem.  In particular, the solution is
\[\Phi(z)=\dfrac{X(z)}{2\pi i}\int_{\mb{\gamma}}\dfrac{g(\tau)}{X^+(\tau)}\dfrac{d\tau}{\tau-z}+X(z)P_{\chi}(z)\]
where $P_{\chi}(z)$ is a polynomial of degree $\chi$ with arbitrary coefficients.  If we impose the additional constraint that $\Phi(z)$ decay to 0 at infinity, then the general solution has the same form except the polynomial must be of one degree less.  Here the solution is
\[\Phi(z)=\dfrac{X(z)}{2\pi i}\int_{\mb{\gamma}}\dfrac{g(\tau)}{X^+(\tau)}\dfrac{d\tau}{\tau-z}+X(z)P_{\chi-1}(z).\]
If $\chi<0$, the solution -- with decay at infinity -- takes the same form except now $P_{\chi}(z)\equiv 0$.

In Gakhov's untranslated \emph{Boundary Value Problems}, 3rd edition \cite[pg 191]{Gakhov3rd} we find the following result for Riemann problems with infinite boundary.  For completeness, we have included the theorem in entirety.
\begin{theorem}\footnote{Many thanks to Vadim Zharnitsky for aid in the translation.}
The singular equation \begin{equation}
\label{gen Carleman}
A(t)\phi(t)+\dfrac{B(t)}{\pi i}\int_{\R}\dfrac{\phi(\tau)}{\tau-t}d\tau=f(t)
\end{equation} and the Riemann problem for the real line with the extra condition \begin{equation}
\label{constraint at infinity}
2A(\infty)c=f(\infty)B(\infty)
\end{equation}
are equivalent in the following sense: if $\Phi(z)$ is a general solution for the boundary problem
\begin{equation}
\label{Hilbert}
\Phi^+(t)=\dfrac{A(t)-B(t)}{A(t)+B(t)}\Phi^-(t)+\dfrac{f(t)}{A(t)+B(t)} \tag{$-\infty< t < \infty$}
\end{equation}
satisfying the condition \eqref{constraint at infinity}, where $c$ is the leading coefficient for the polynomial $P_{\chi}(z)$ for $\chi\geq0$, and $\ds c=-\dfrac{1}{2\pi i}\int_{\R}\dfrac{f(\tau)}{[A(\tau)+B(\tau)]X^+(\tau)}\dfrac{d\tau}{\tau+i}$ for $\chi<0$, then the function $\phi(t)=\Phi^+(t)-\Phi^-(t)$ is the solution of \eqref{gen Carleman}.  Conversely, if $\phi(t)$ is the general solution to \eqref{gen Carleman}, then the Cauchy type integral $\ds \Phi(z)=\dfrac{1}{2\pi i}\int_{\R}\dfrac{\phi(\tau)}{\tau-z}d\tau$ is the solution of the Riemann problem \eqref{Hilbert} satisfying the condition \eqref{constraint at infinity}.
\end{theorem}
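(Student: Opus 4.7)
\medskip

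\noindent\textbf{Proof plan.} The strategy is to use the Sokhotski--Plemelj jump formulas as the algebraic bridge between the two problems. Recall that for a Cauchy-type integral $\Phi(z)=\frac{1}{2\pi i}\int_{\R}\frac{\phi(\tau)}{\tau-z}\,d\tau$ with H\"older density and suitable decay, the boundary values on the real line satisfy
\[
\Phi^+(t)-\Phi^-(t)=\phi(t), \qquad \Phi^+(t)+\Phi^-(t)=\frac{1}{\pi i}\,\textup{p.v.}\!\int_{\R}\frac{\phi(\tau)}{\tau-t}\,d\tau.
\]

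For the forward direction, I would start from a solution $\Phi(z)$ of \eqref{Hilbert}, clear the denominator to obtain $(A+B)\Phi^+-(A-B)\Phi^-=f$, and regroup the left side as $A(\Phi^+-\Phi^-)+B(\Phi^++\Phi^-)$. Setting $\phi(t)=\Phi^+(t)-\Phi^-(t)$ and applying the second Plemelj identity transforms this directly into the singular integral equation \eqref{gen Carleman}. Conversely, given $\phi$ solving \eqref{gen Carleman}, I would form the Cauchy integral $\Phi(z)$, invoke Plemelj to rewrite the principal-value integral as $\pi i(\Phi^++\Phi^-)$, and then rearrange algebraically: the equation becomes $A(\Phi^+-\Phi^-)+B(\Phi^++\Phi^-)=f$, which solves for $\Phi^+$ in terms of $\Phi^-$ as precisely \eqref{Hilbert} with jump coefficient $(A-B)/(A+B)$ and inhomogeneity $f/(A+B)$.

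The main obstacle is reconciling the condition at infinity \eqref{constraint at infinity}. Because the contour is the whole real line, the general solution of \eqref{Hilbert} admits either a polynomial $P_\chi(z)$ with arbitrary coefficients (when $\chi\geq 0$) or a specific non-trivial inhomogeneous piece (when $\chi<0$), none of which is visible to the integral equation. However, a Cauchy integral built from an integrable density decays like $1/z$ at infinity, so only those Riemann--Hilbert solutions whose asymptotics are compatible with this decay arise from solutions to \eqref{gen Carleman}. I would verify this by expanding
\[
\Phi(z)=\frac{X(z)}{2\pi i}\int_{\R}\frac{g(\tau)}{X^+(\tau)}\frac{d\tau}{\tau-z}+X(z)P_\chi(z)
\]
in a Laurent series at $z=\infty$, using the prescribed growth $X(z)\sim z^{-\chi}$ coming from the index and the expansion $\frac{1}{\tau-z}=-\sum_k \tau^k/z^{k+1}$. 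Matching the leading $1/z$ coefficient against the corresponding coefficient $-\frac{1}{2\pi i}\int_{\R}\phi(\tau)\,d\tau$ of a Cauchy integral yields exactly the constraint $2A(\infty)c=f(\infty)B(\infty)$, where $c$ is identified with the leading coefficient of $P_\chi$ in the $\chi\geq 0$ case and with the explicit integral expression in the $\chi<0$ case.

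Finally, H\"older continuity of $A_\xi(v)$, $B(v)$, and $F_\xi(v)$ guarantees that the boundary values $\Phi^\pm$ exist, the principal-value integrals converge, and the Plemelj identities apply, so the formal manipulations above are rigorous. Assembling the two directions together with the asymptotic matching yields the stated equivalence.
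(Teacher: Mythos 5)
This theorem is not proved in the paper at all: it is imported verbatim from Gakhov's \emph{Boundary Value Problems} (3rd ed.) as an external result, so there is no in-paper argument to compare against. Your proposal reconstructs the standard equivalence argument — clear denominators in the boundary condition, regroup as $A(\Phi^+-\Phi^-)+B(\Phi^++\Phi^-)=f$, and apply the two Sokhotski--Plemelj identities in each direction — which is precisely the route Gakhov takes for the dominant (Carleman) equation, and that core is correct.

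The one place your sketch is thinner than it should be is the condition at infinity, which you rightly flag as the main obstacle. Your premise that ``a Cauchy integral built from an integrable density decays like $1/z$'' does not match the natural solution class here: on the infinite contour the point $\infty$ lies \emph{on} the boundary curve, the density is only required to be H\"older continuous at infinity (with $\phi(\infty)$ possibly nonzero), and one then has $\Phi^{\pm}(z)\to\pm\tfrac{1}{2}\phi(\infty)+\text{const}$ rather than $O(1/z)$ decay. The constraint $2A(\infty)c=f(\infty)B(\infty)$ comes from evaluating the boundary relation at this exceptional boundary point using the regularized Cauchy kernel $\frac{1}{\tau-z}-\frac{1}{\tau+i}$ — which is why the factor $\frac{d\tau}{\tau+i}$ appears in the stated formula for $c$ — not from matching leading Laurent coefficients of an integrable-density Cauchy transform. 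Your Laurent-matching step would need to be redone in that regularized framework to produce the exact constant; as written it assumes more decay than the theorem's hypotheses provide. This is a repairable gap in an otherwise correct outline, and it happens to be harmless for the application in this paper, where $B(\infty)=-\pi\phi(\infty)=0$ and the relevant densities do decay.
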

In other words, provided the constraint \eqref{constraint at infinity} is satisfied, the solution to the Riemann problem for the half-plane can be used to construct the solution for \eqref{SIE2}.

\subsubsection{Index of the Singular Equation}

For our problem, \begin{equation}
\chi= \text{Ind }G_{\xi}(v)=\dfrac{1}{2\pi}[\text{arg }G_{\xi}(v)]_{\R}.
\end{equation}
is the winding number of the image of the boundary curve $\R$ in $\C$ under the map $G_{\xi}(v)$.
Using the definition of $G_{\xi}(v)$, we see that \begin{equation}
\label{G}
G_{\xi}(v)=\dfrac{\xi^2-\pi^2\phi^2(v)+4D^2(v)}{(\xi-\pi\phi(v))^2+4D^2(v)}+\left(\dfrac{4\pi\phi(v)D(v)}{(\xi-\pi\phi(v))^2+4D^2(v)}\right)i.
\end{equation}
Note that the image of $\R$ is real-valued at only three points, $v=0$ and $v=\pm \infty$.  Since $G_{\xi}(\pm\infty)=(1,0)$, every image curve starts and ends at $(1,0)$.  At $v=0$, $\ds G_{\xi}(0)=\left(\dfrac{\xi+\sqrt{\pi}}{\xi-\sqrt{\pi}},0\right)$.  For the image curve to have a non-zero winding number, it is necessary that $\ds \dfrac{\xi+\sqrt{\pi}}{\xi-\sqrt{\pi}}<0$.  We find that for $\xi\in(-\sqrt{\pi},\sqrt{\pi})$, $\chi=-1$.  For all other $\xi$, $\chi=0$.  (For examples, see Figure \ref{index}.)

\begin{figure}

  \centering

  \includegraphics[width=3in]{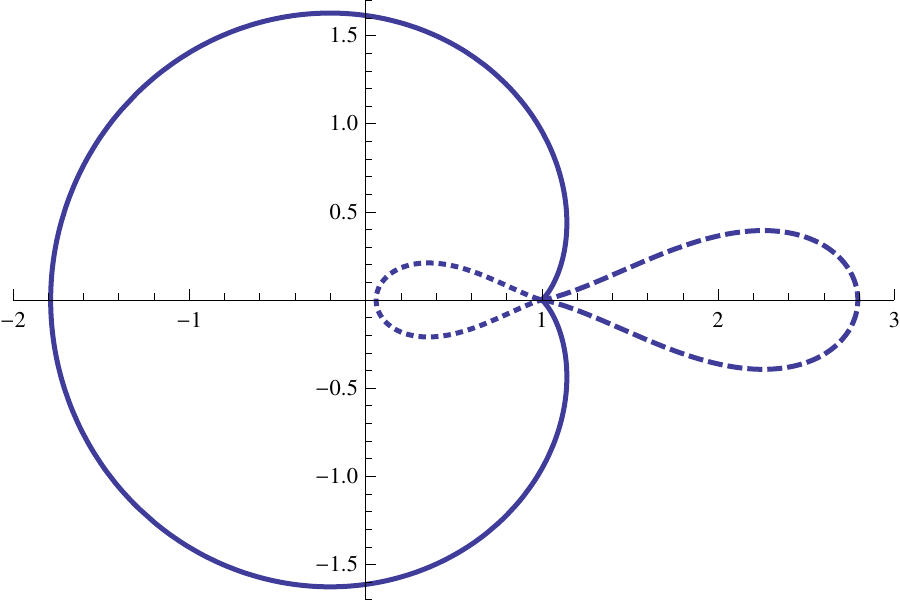}\\
  \caption{Images of the real line under $G_{\xi}(v)$ (solid curve $\xi=1/2$, dotted curve $\xi=-2$, dashed curve $\xi=3.75$)} \label{index}

\end{figure}

\subsubsection{Solution to the Associated Riemann Problem}

Let $\ds X^{+}(v)=e^{\Gamma^+(v)}$ and $\ds X^{-}(v)=\left(\dfrac{v-i}{v+i}\right)^{-\chi}e^{\Gamma^-(v)}$ where \[\Gamma(v)=\dfrac{1}{2\pi i}\int_{\R}\ln \left[\left(\dfrac{\tau-i}{\tau+i}\right)^{-\chi}G_{\xi}(\tau)\right]\dfrac{d\tau}{\tau-v},\] and let \[\Psi(v)=\dfrac{1}{2\pi i}\int_{\R}\dfrac{g_{\xi}(\tau)}{X^+(\tau)}\dfrac{d\tau}{\tau-v}\] where $G_{\xi}(v)$ and $g_{\xi}(v)$ are defined as in \eqref{Riemann}. Then the unique solution to the Riemann problem for the half-plane is given by \[\Phi(v)=X(v)\Psi(v).\]

\subsubsection{Solving for the Coefficients}

Given the above solution, from repeated use of the Sokhotski-Plemelj formulas we get a unique representation for the $K_{\xi}(v)$, namely \begin{align*}
K_{\xi}(v) &=\Phi^+(v)-\Phi^-(v),\\
&= X^+(v)\Psi^+(v)-X^-(v)\Psi^-(v),\\
&=X^+(v)\left(\dfrac{1}{2}\dfrac{g_{\xi}(v)}{X^+(v)}+\dfrac{1}{2\pi i}\int_{\R}\dfrac{g_{\xi}(\tau)}{X^+(\tau)}\dfrac{d\tau}{\tau-v}\right) -X^-(v)\left(-\dfrac{1}{2}\dfrac{g_{\xi}(v)}{X^+(v)}+\dfrac{1}{2\pi i}\int_{\R}\dfrac{g_{\xi}(\tau)}{X^+(\tau)}\dfrac{d\tau}{\tau-v}\right),\\
&= \dfrac{g_{\xi}(v)}{2}\left[1+\dfrac{1}{G_{\xi}(v)}\right]+\dfrac{X^+(v)}{2\pi i}\left[1-\dfrac{1}{G_{\xi}(v)}\right]\int_{\R}\dfrac{g_{\xi}(\tau)}{X^+(\tau)}\dfrac{d\tau}{\tau-v},
\end{align*} since $X^+(v)=G_{\xi}(v)X^-(v)$.
Using the definitions of $X^+(v)$, $G_{\xi}(v)$ and $g_{\xi}(v)$ yields the following representation:
\begin{equation}
\label{coef D}
K_{\xi}(v)=\dfrac{A_{\xi}(v)}{A^2_{\xi}(v)+B^2(v)}F_{\xi}(v)-\dfrac{e^{\Gamma^+(v)} B(v)}{A_{\xi}(v)-B(v)}\dfrac{1}{\pi i}\int_{\R}\dfrac{F_{\xi}(\tau)}{e^{\Gamma^+(\tau)}(A_{\xi}(\tau)+B(\tau))}\dfrac{d\tau}{\tau-v}
\end{equation}
where $A_{\xi}(v)= \xi-2iD(v)$, $B(v) = -\pi \phi(v)$,
\begin{align*}
F_{\xi}(v) &= \phi(v)\left(\hat{f}(\xi,v)-\dfrac{C_\Lambda(\xi)}{1 +\Lambda(\xi)+\xi v i}\right),
\end{align*} and
\begin{align*}
\Gamma^+(v)=\frac{1}{2}\ln \left[\left(\dfrac{v-i}{v+i}\right)^{-\chi}\dfrac{A_{\xi}(v)-B(v)}{A_{\xi}(v)+B(v)}\right]+\dfrac{1}{2\pi i}\int_{\R}\ln\left[\left(\dfrac{\tau-i}{\tau+i}\right)^{-\chi}\dfrac{A_{\xi}(\tau)-B(\tau)}{A_{\xi}(\tau)+B(\tau)}\right]\dfrac{d\tau}{\tau-v}.
\end{align*}  (Recall that $D(v)$ is Dawson's function.)

Additionally, when $\chi=-1$, we get a unique representation of the coefficient $C_{\Lambda}(\xi)$.  Since $B(\infty)=0$ and $A_{\xi}(\infty)=\xi$, the constraint at infinity condition \eqref{constraint at infinity} yields the additional condition that $c\equiv0$, or equivalently \[\int_{\R}\dfrac{F_{\xi}(\tau)}{[A_{\xi}(\tau)+B(\tau)]X^+(\tau)}\dfrac{d\tau}{\tau+i}=0.\] The definition of $F_{\xi}(v)$ yields
\begin{equation}
\label{coef C}
C_{\Lambda}(\xi)=\dfrac{\ds \int_{\R}\dfrac{\phi(\tau)\hat{f}(\xi,\tau)}{e^{\Gamma^+(\tau)}[A_{\xi}(\tau)+B(\tau)][1+\Lambda(\xi)+\xi \tau i]}\dfrac{d\tau}{\tau+i}}{\ds \int_{\R}\dfrac{\phi(\tau)}{e^{\Gamma^+(\tau)}[A_{\xi}(\tau)+B(\tau)][1+\Lambda(\xi)+\xi \tau i]}\dfrac{d\tau}{\tau+i}}
\end{equation}

Now, it is important to note that the form of the coefficients are dependent upon $\chi$ and that $\chi$ is dependent upon $\xi$.  Recall that when $|\xi|\geq \sqrt{\pi}$, $\chi=0$.  Additionally, the value of $C_{\Lambda}(\xi)$ is dependent upon $\Lambda(\xi)$ and $\Lambda(\xi)$ only makes sense for values of $\xi$ in $(-\sqrt{\pi},\sqrt{\pi})$.  In order to make sense of this, in addition to the requirement $\hat{f}(\xi,v)$ be such that $\ds \phi(v)\left(\hat{f}(\xi,v)-\dfrac{C_\Lambda(\xi)}{1 +\Lambda(\xi)+\xi v i}\right)$ is H\"{o}lder continuous in $v$ for all $\xi$ in $(-\sqrt{\pi},\sqrt{\pi})$, we need the additional condition that $\hat{f}(\xi,v)$ be in the class of functions such that $C_{\Lambda}(\xi)\equiv 0$ when $|\xi|\geq \sqrt{\pi}$.  Since we are requiring that $K_{\xi}(v)=0$ at infinity, we are still guaranteed uniqueness of our solution for all $\xi$.

\section{Applying the Spectral Decomposition}

We now apply the properties of our spectral decomposition to find the general solution to the original PIDE \eqref{thepide}.  We have transformed the PIDE into \begin{equation*}
\frac{\partial \hat{f}}{\partial t}(t,\xi,v)=-v i\xi\hat{f}(t,\xi,v)-\hat{f}(t,\xi,v)+\int_{\R} \phi(w)\hat{f}(t,\xi,w)dw
\end{equation*} and we write this as \begin{equation}
\label{trans pde in L}
\frac{\partial \hat{f}}{\partial t}(t,\xi,v)=L(\hat{f})(t,\xi,v)
\end{equation} where $L$ is defined to be \begin{eqnarray*}
L(g)(\xi,v) &=& -\xi v i g(\xi,v)-g(\xi,v)+\int_{\R} \phi(w)g(\xi,w)dw.
\end{eqnarray*}
By the transform arising from our spectral decomposition \eqref{transform}, we look for solutions of the form \begin{equation*}
\hat{f}(t,\xi,v)=\dfrac{C_\Lambda(t,\xi)}{1 +\Lambda(\xi)+\xi v i}-\text{p.v.} \int_{\R}\frac{K_{\xi}(t,w)dw}{i(w-v)} + \frac{1}{\phi(v)}\left(\xi-\textup{p.v.}\int_{\R}\frac{\phi(w)}{i( w - v)}\,dw\right)K_{\xi}(t,v).
\end{equation*}
Then, by Corollaries \ref{cor L on basis1} and \ref{cor L on basis2},  \eqref{trans pde in L} becomes \begin{align*}
 \dfrac{\dfrac{d C_{\Lambda}}{dt}(t,\xi)-\Lambda(\xi)C_\Lambda(t,\xi)}{1 +\Lambda(\xi)+\xi v i}&-\textup{p.v.} \int_{\R}\frac{\left[\dfrac{d K_{\xi}}{dt}(t,w)-(-1-\xi w i)K_{\xi}(t,w)\right]dw}{i(w-v)}\\
&+ \frac{1}{\phi(v)}\left(\xi-\textup{p.v.}\int_{\R}\frac{\phi(w)}{i( w - v)}\,dw\right)\left[\dfrac{d K_{\xi}}{dt}(t,v)-(-1-\xi v i)K_{\xi}(t,v)\right] =0.
\end{align*}
By the uniqueness of our spectral representation, this yields the ODEs
\[\dfrac{d C_{\Lambda}}{dt}(t,\xi)-\Lambda(\xi)C_\Lambda(t,\xi)=0 \text{ and } \dfrac{d K_{\xi}}{dt}(t,v)-(-1-\xi v i)K_{\xi}(t,v)=0.\] Hence,
\[C_\Lambda(t,\xi)=\tilde{C}_{\Lambda}(\xi)e^{\Lambda(\xi) t} \text{ and } K_{\xi}(t,v)=\tilde{K}_{\xi}(v)e^{(-1-\xi v i)t}.\]

\begin{theorem} Let $f(0,x,v):=f_0(x,v)$ represent the initial molecular (number) density of the gas such that $\hat{f}_0(\xi,v)$ is of compact support on $\xi\in(-\sqrt{\pi},\sqrt{\pi})$ and such that \begin{align*}
F_0(\xi,v) &= \phi(v)\left(\hat{f_0}(\xi,v)-\dfrac{C_0(\xi)}{1 +\Lambda(\xi)+\xi v i}\right)
\end{align*}
is H\"{o}lder continuous in $v$ on $\R$ when $C_0(\xi)$ is defined by
\begin{equation*}
C_0(\xi)=\dfrac{\ds \int_{\R}\dfrac{\phi(\tau)\hat{f_0}(\xi,\tau)}{e^{\Gamma^+(\tau)}[A_{\xi}(\tau)+B(\tau)][1+\Lambda(\xi)+\xi \tau i]}\dfrac{d\tau}{\tau+i}}{\ds \int_{\R}\dfrac{\phi(\tau)}{e^{\Gamma^+(\tau)}[A_{\xi}(\tau)+B(\tau)][1+\Lambda(\xi)+\xi \tau i]}\dfrac{d\tau}{\tau+i}}
\tag{when $|\xi|<\sqrt{\pi}$}
\end{equation*}
and $C_0(\xi)\equiv 0$ when $|\xi|\geq \sqrt{\pi}$ where $A_{\xi}(v)= \xi-2iD(v)$, $B(v) = -\pi \phi(v)$, and $\ds D(y)=e^{-y^2}\int_0^y e^{x^2}\,dx$ is Dawson's function.  Then the Fourier transform of the general solution to \eqref{thepide} with initial condition is \begin{align}
\nonumber \hat{f}(t,\xi,& v) = e^{\Lambda(\xi) t}\dfrac{C_0(\xi)}{1 +\Lambda(\xi)+\xi v i}\\
\label{the total solution}
&+e^{-t}\left[-\textup{p.v.} \int_{\R}\frac{e^{(-\xi w i)t}K_0(\xi,w)dw}{i(w-v)} + \frac{e^{(-\xi v i)t}}{\phi(v)}\left(\xi-\textup{p.v.}\int_{\R}\frac{\phi(w)}{i( w - v)}\,dw\right)K_0(\xi,v)\right]
\end{align}
where
\begin{equation*}
K_0(\xi,v)=\dfrac{A_{\xi}(v)}{A^2_{\xi}(v)+B^2(v)}F_0(\xi,v)-\dfrac{e^{\Gamma^+(v)} B(v)}{A_{\xi}(v)-B(v)}\dfrac{1}{\pi i}\int_{\R}\dfrac{F_0(\xi,\tau)}{e^{\Gamma^+(\tau)}(A_{\xi}(\tau)+B(\tau))}\dfrac{d\tau}{\tau-v}
\end{equation*}
where
\begin{align*}
\Gamma^+(v)=\frac{1}{2}\ln \left[\left(\dfrac{v-i}{v+i}\right)^{-\chi}\dfrac{A_{\xi}(v)-B(v)}{A_{\xi}(v)+B(v)}\right]+\dfrac{1}{2\pi i}\int_{\R}\ln\left[\left(\dfrac{\tau-i}{\tau+i}\right)^{-\chi}\dfrac{A_{\xi}(\tau)-B(\tau)}{A_{\xi}(\tau)+B(\tau)}\right]\dfrac{d\tau}{\tau-v}.
\end{align*}

\end{theorem}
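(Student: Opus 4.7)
The plan is to assemble the pieces already developed in the paper and verify that the candidate formula both satisfies the PIDE and matches the initial data. Given $\hat{f}_0$ meeting the stated hypotheses, I first invoke the spectral inversion machinery at $t=0$: the compact support of $\hat{f}_0$ on $(-\sqrt{\pi},\sqrt{\pi})$ in $\xi$, together with the stipulation $C_0(\xi)\equiv 0$ for $|\xi|\geq\sqrt{\pi}$, is consistent with the jump in index $\chi$ of $G_\xi$ at $|\xi|=\sqrt{\pi}$, and the H\"older hypothesis on $F_0(\xi,\cdot)$ is exactly what is required for the Sokhotski--Plemelj/Riemann--Hilbert computation that produced formulas \eqref{coef D} and \eqref{coef C}. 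Consequently, those formulas select unique coefficients $C_0(\xi)$ and $K_0(\xi,v)$ whose insertion into \eqref{transform} reconstructs $\hat{f}_0$.

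Second, the paragraph immediately preceding the theorem has already applied Corollaries \ref{cor L on basis1} and \ref{cor L on basis2}, together with uniqueness of the spectral representation, to reduce $\partial_t\hat{f} = L(\hat{f})$ to the decoupled ODEs
\[
\frac{dC_\Lambda}{dt}(t,\xi) = \Lambda(\xi)\,C_\Lambda(t,\xi), \qquad \frac{dK_\xi}{dt}(t,v) = (-1-\xi v i)\,K_\xi(t,v).
\]
Integrating with initial data $C_\Lambda(0,\xi) = C_0(\xi)$ and $K_\xi(0,v) = K_0(\xi,v)$ from step one yields $C_\Lambda(t,\xi) = C_0(\xi)e^{\Lambda(\xi)t}$ and $K_\xi(t,v) = K_0(\xi,v)e^{(-1-\xi v i)t}$. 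Substituting into \eqref{transform} and pulling the common factor $e^{-t}$ out of the $\boldsymbol{\ell}$-part produces exactly \eqref{the total solution}. The initial condition $\hat{f}(0,\xi,v)=\hat{f}_0(\xi,v)$ holds by construction, and the PIDE is satisfied since each spectral component has been evolved according to its own ODE.

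The main obstacle I anticipate is verifying that the regularity hypotheses on $\hat{f}_0$ persist for $t>0$, so that the same Riemann--Hilbert inversion remains valid at every time and the decoupling argument can be legitimately applied. The required checks are: (i) compact support of $\hat{f}(t,\cdot,v)$ in $\xi$ on $(-\sqrt{\pi},\sqrt{\pi})$, which is immediate because the time evolution does not couple distinct $\xi$ and the factor $e^{\Lambda(\xi)t}$ does not enlarge the support; and (ii) H\"older continuity in $v$ of the corresponding $F(t,\xi,v)$, which follows because multiplication by the smooth, uniformly bounded factor $e^{-\xi v i t}$ preserves H\"older continuity (with the same exponent and a controllable constant). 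Uniqueness then descends from the uniqueness of the Riemann--Hilbert solution at each $t$, already established immediately before the theorem, closing the argument.
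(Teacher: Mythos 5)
Your proposal is correct and follows essentially the same route as the paper: the theorem is just the assembly of the preceding derivation, namely the reduction of $\partial_t\hat{f}=L(\hat{f})$ to the decoupled ODEs via Corollaries \ref{cor L on basis1} and \ref{cor L on basis2}, integration of those ODEs, and identification of the initial coefficients $C_0$ and $K_0$ with the Riemann--Hilbert formulas \eqref{coef C} and \eqref{coef D} applied to $\hat{f}_0$. Your added check that the H\"older and support hypotheses persist under the time evolution is a small refinement the paper leaves implicit, not a different argument.
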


\section{General Solutions evolve to Grossly Determined Solutions}

We are  now at a point where we can demonstrate the third conjecture of Truesdell and Muncaster \cite{Tru_n_Mun}: general solutions evolve asymptotically in time to the class of grossly determined solutions.  At this point, we will see that this amounts to nothing more than definition chasing.  What was unexpected apriori is how different portions of the spectrum correspond to the asymptotic behaviour of the solution.  Roughly speaking, the portion of the general solution corresponding to the real-valued part of the spectrum, $(-1,0]$, tends in time specifically to the subclass of solutions defined by the grossly determined solutions; the portion of the general solution corresponding to the vertical line $\boldsymbol{\ell}$ in $\C$ is transient.

Recall that $\Lambda(\xi)$ takes values in the open interval $(-1,0)$ (see Figure \ref{xi versus lambda}).  Hence, the asymptotic behavior of the general solution \eqref{the total solution} tends to the part of the spectral decomposition arising from the real-part of the spectrum.  In other words,
\begin{equation*}
\hat{f}(t,\xi,v)\sim e^{\Lambda(\xi) t}\dfrac{C_0(\xi)}{1 +\Lambda(\xi)+\xi v i} \tag{as $t\rightarrow \infty$}.
\end{equation*}

The main achievement in \cite{carty:2016} was to construct the class of grossly determined solutions to \eqref{thepide}.  By ansatz (motivated by a lemma of H\"{o}rmander \cite[pg 15]{Hormander}), one looked for convolution solutions of the form $f(t,x,v)=\int_{\field{R}}K_v(y)\rho(t,x-y)\,dy$, where $\rho(t,x)$ is the density field.  The ultimate goal of this paper is to show that the any element from the class of general solutions decays (in time) to an element of the subclass of grossly determined solutions.  For completeness, we include the main theorem, Theorem 1, of \cite{carty:2016}.

\begin{theorem}
\label{GDS thm}
Consider the one-dimensional model of fluid flow \begin{equation*}
\frac{\partial f}{\partial t}(t,x,v)+v\frac{\partial f}{\partial x}(t,x,v)=-f(v,x,t)+\int_{-\infty}^{\infty} \phi(w)f(w,x,t)dw \tag{1.1}
\end{equation*}
where $f(t,x,v)$ is the molecular density function of the gas and $\phi$ is the probability density function $\ds \phi(v):=e^{-v^2}/\sqrt{\pi}$.  Let $\rho(t,x)$ represent the density function of the gas:
\[\rho(t,x):=\int_{\field{R}}\phi(v)f(t,x,v)\,dv\]
where the Fourier transform $\hat{\rho}(t,\xi)$ has support within $(-\sqrt{\pi},0)\cup (0,\sqrt{\pi})$. Let $\hat{\rho}_0(\xi)$ denote the Fourier transform of the density function at $t=0$.
Then a solution to equation (\ref{thepide}) is given by
\begin{equation}
f(t,x,v)=\int_{\field{R}}K_v(y)\rho(t,x-y)\,dy.
\end{equation} where the Fourier transform of $f$ is
\begin{equation}
\label{GDSundertransform}
\hat{f}(t,\xi, v)=\left(\frac{1}{1-i\xi k(\xi)+i \xi v }\right)\hat{\rho}_0(\xi)e^{-i \xi k(\xi) t}
\end{equation}
where $\ds k(\xi)=\left(\frac{-1+\xi C(\xi)}{\xi}\right)i$ and $c=C(\xi)$ is defined implicitly by $\ds \xi=\int_{\R} \frac{c\phi(v)}{c^2+v^2}\,dv$.
\end{theorem}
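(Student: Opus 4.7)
The plan is a direct verification in Fourier space followed by inversion to recover the convolution form. Taking the Fourier transform in $x$ of \eqref{thepide} produces $\partial_t \hat{f}(t,\xi,v) = -i\xi v\,\hat{f} - \hat{f} + \hat{\rho}(t,\xi)$ with $\hat{\rho}(t,\xi) := \int_{\R}\phi(v)\hat{f}(t,\xi,v)\,dv$. The candidate \eqref{GDSundertransform} has the separable form $\hat{f}(t,\xi,v) = \hat{K}_v(\xi)\,\hat{\rho}_0(\xi)\,e^{-i\xi k(\xi) t}$ with $\hat{K}_v(\xi) := (1 - i\xi k(\xi) + i\xi v)^{-1}$, so the proof reduces to two checks: self-consistency in the form $\hat{\rho}(t,\xi) = \hat{\rho}_0(\xi)e^{-i\xi k(\xi) t}$, and the resulting algebraic identity for the PIDE.

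The heart of the argument is self-consistency. Using $k(\xi) = \frac{-1+\xi C(\xi)}{\xi}\,i$ one finds $1 - i\xi k(\xi) = \xi C(\xi)$, so the denominator of $\hat{K}_v$ simplifies to $\xi(C(\xi) + iv)$. Rationalizing the integrand gives
\[
\int_{\R}\frac{\phi(v)\,dv}{\xi(C(\xi) + iv)} = \frac{1}{\xi}\left[\int_{\R}\frac{C(\xi)\,\phi(v)}{C(\xi)^2 + v^2}\,dv - i\int_{\R}\frac{v\,\phi(v)}{C(\xi)^2 + v^2}\,dv\right].
\]
The second integral vanishes by oddness in $v$; the first equals $\xi$ by the defining equation for $C(\xi)$. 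Hence $\int \phi\,\hat{K}_v\,dv = 1$, which yields the asserted evolution of $\hat{\rho}$. With this identity in hand, the PIDE becomes a one-line algebraic check: $\partial_t \hat{f} = -i\xi k(\xi)\hat{f}$ matches $(-i\xi v - 1)\hat{f} + \hat{\rho}_0(\xi)e^{-i\xi k(\xi) t}$ precisely because $(1 - i\xi k(\xi) + i\xi v)\hat{f} = \hat{\rho}_0(\xi)e^{-i\xi k(\xi) t}$ by the very form of $\hat{f}$.

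Inverting the Fourier transform of $\hat{f} = \hat{K}_v\cdot\hat{\rho}$ in $\xi$ turns multiplication into spatial convolution and produces the stated representation $f(t,x,v) = \int_{\R}K_v(y)\,\rho(t,x-y)\,dy$ with $K_v = \mathcal{F}^{-1}_{\xi}[\hat{K}_v]$. The main obstacle, and the reason for the support hypothesis $\operatorname{supp}\hat{\rho}_0 \subset (-\sqrt{\pi},0)\cup(0,\sqrt{\pi})$, is that $C(\xi) = \Xi^{-1}(\xi)$ is defined only on this set (compare Figure \ref{xi versus eta} and the range analysis in Proposition \ref{the_real_spectrum}); outside it the wave speed $k(\xi)$ is undefined and the candidate $\hat{f}$ has no meaning. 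Restricting $\hat{\rho}_0$ to this set places every Fourier-side manipulation within the natural domain of $C(\xi)$ and guarantees integrability of the rationalized integrals above. As a byproduct this restriction also ties the theorem to the spectral analysis of the present paper: the identification $-i\xi k(\xi) = -1 + \xi C(\xi) = \Lambda(\xi)$ puts the grossly determined solutions in exact correspondence with the real-spectrum eigendistributions of Theorem \ref{basis1 thm}.
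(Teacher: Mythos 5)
Your verification is correct and complete for the statement as phrased (``a solution to equation \eqref{thepide} is given by\dots''). Note, however, that this paper does not actually prove Theorem \ref{GDS thm}: it is quoted verbatim, for completeness, from \cite{carty:2016}, so there is no in-paper proof to compare against. According to the surrounding discussion, the original argument runs in the opposite direction from yours: one posits the convolution ansatz $f=\int K_v(y)\rho(t,x-y)\,dy$ (motivated by H\"{o}rmander's lemma), derives the condition that $\hat K_v$ must satisfy, and is thereby led to the dispersion relation $\xi=\int_{\R}\frac{c\,\phi(v)}{c^2+v^2}\,dv$ defining $C(\xi)$; you instead take the closed-form answer and check it, which is logically sufficient and arguably cleaner. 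Your two key computations are sound: the algebra $1-i\xi k(\xi)=\xi C(\xi)$ is right, and the normalization $\int_{\R}\phi(v)\bigl(1-i\xi k(\xi)+i\xi v\bigr)^{-1}dv=1$ follows exactly as you say from oddness plus the defining equation for $C(\xi)$ (this identity is the constraint equation \eqref{lambda_constraint_eqn} at $\lambda=\Lambda(\xi)$ with $\eta=C(\xi)=(1+\lambda)/\xi$, so your self-consistency step reproduces the content of Proposition \ref{the_real_spectrum}). You also correctly identify the role of the support hypothesis: $C=\Xi^{-1}$ exists only on $(-\sqrt{\pi},0)\cup(0,\sqrt{\pi})$, and $C(\xi)$ is real and nonzero there, which is what legitimizes the rationalization $\frac{1}{C+iv}=\frac{C-iv}{C^2+v^2}$. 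The one point worth tightening is the final inversion step: to land on the stated convolution in $\rho(t,\cdot)$ rather than $\rho_0$ you should invoke the self-consistency identity $\hat\rho(t,\xi)=\hat\rho_0(\xi)e^{-i\xi k(\xi)t}$ you just proved, so that $\hat f=\hat K_v\hat\rho(t,\cdot)$, and one should at least remark that $\hat K_v$ is a tempered distribution in $\xi$ on the support of $\hat\rho$ so that $K_v=\mathcal{F}^{-1}[\hat K_v]$ and the convolution make sense; but this is a minor regularity point, not a gap in the argument.
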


With just a minor bit of manipulation, we can easily see that the portion of the general solution that corresponds to the real part of the spectrum also corresponds to the subclass of grossly determined solutions.  Recall that $\Lambda(\xi)$ was defined $\Lambda(\xi)=-1+\xi C(\xi)$.  Hence, \[-i\xi k(\xi)=-i\xi\left(\frac{-1+\xi C(\xi)}{\xi}\right)i=\Lambda(\xi).\]  Hence, the grossly determined solution (under transform) \eqref{GDSundertransform} can be rewritten as
\begin{equation*}
\hat{f}(t,\xi, v)=e^{\Lambda(\xi) t}\frac{\hat{\rho}_0(\xi)}{1+\Lambda(\xi)+\xi v i}.
\end{equation*}
Thus proving Truesdell and Muncaster's third conjecture for the model Boltzmann \eqref{thepide}: that the subclass of grossly determined solutions acts as an attractor set for the class of general solutions.  That is, in time, all gas flows act as grossly determined solutions.

\section{Conclusions}

In the terms of Truesdell and Muncaster's conjectures on grossly determined solutions, we have demonstrated for the one-dimensional BGK model that the class of general solutions decay asymptotically to the subclass of grossly determined solutions.  In other words, the asymptotic gas flow determined by the BGK model is dictated solely by the initial density field of the gas.  As the BGK model is a linearization of the one-dimensional Boltzmann equation about a maxwellian, one would hope that this paper is a first step in showing that Truesdell and Muncaster's three conjectures also hold for one-dimensional Boltzmann equations with a more robust collision operator.

\bibliographystyle{hplain}
\bibliography{GDS_ElemSolns_bib}

\end{document}